\documentclass[11pt,reqno]{amsart}

\usepackage{amsmath,amssymb,amsthm,mathtools}
\usepackage{booktabs}
\usepackage{float}  
\usepackage{enumitem}
\usepackage[margin=1.15in]{geometry}
\usepackage[colorlinks=true,linkcolor=blue,citecolor=blue,urlcolor=blue]{hyperref}
\usepackage{bm}
\IfFileExists{bbm.sty}{\usepackage{bbm}}{\providecommand{\mathbbm}[1]{\mathbf{##1}}}
\newtheorem{theorem}{Theorem}[section]
\newtheorem{proposition}[theorem]{Proposition}
\newtheorem{lemma}[theorem]{Lemma}
\newtheorem{corollary}[theorem]{Corollary}
\theoremstyle{definition}
\newtheorem{definition}[theorem]{Definition}
\newtheorem{example}[theorem]{Example}
\theoremstyle{remark}
\newtheorem{remark}[theorem]{Remark}

\newcommand{\Rmax}{\mathbb{R}_{\max}}
\newcommand{\K}{\mathbb{K}}
\newcommand{\val}{\operatorname{val}}
\newcommand{\per}{\operatorname{per}}
\newcommand{\TP}{\mathsf{TP}}
\newcommand{\TN}{\mathsf{TN}}
\newcommand{\TPT}{\mathsf{TP}^{\mathrm{trop}}}
\newcommand{\TNT}{\mathsf{TN}^{\mathrm{trop}}}

\newcommand{\lc}{\operatorname{lc}}
\newcommand{\NP}{\operatorname{NP}^{+}}

\title[Spectral Equality for Tropical TN Matrices]{Spectral Equality for
Tropical Totally Nonnegative Matrices:\\
Dominant Cancellation and Newton Polygons}

\author[M. R. Saeidi Dahaki]{Mohammadrahim Saeidi Dahaki}
\address{Department of Mathematical Sciences, Isfahan University of Technology,
Isfahan 84156-83111, Iran}
\email{mrs.saeidi78@gmail.com}

\author[S. M. Manjegani]{Seyed Mahmoud Manjegani}
\address{Department of Mathematical Sciences, Isfahan University of Technology,
Isfahan 84156-83111, Iran}
\address{Department of Mathematics and Statistics, University of Regina,
Regina, Saskatchewan S4S 0A2, Canada}
\email{manjgani@iut.ac.ir}
\email{manjeg1s@uregina.ca}
\thanks{Corresponding author: S. M. Manjegani.}

\subjclass[2020]{15A80, 15A18, 15A15, 14T90}
\keywords{Max-plus algebra, tropical total positivity, tropical total
nonnegativity, generalized Puiseux series, valuation, Monge matrix,
tropical eigenvalue, Newton polygon, dominant cancellation, spectral equality,
majorization, log-majorization}

\begin{document}

\begin{abstract}
Let $\mathbf A$ be a totally nonnegative matrix over a real closed
non-Archimedean valued field, and suppose that its valuation $A$ is tropically
totally nonnegative.  Although the valuations of the classical eigenvalues are
weakly majorized by the tropical eigenvalues of $A$, equality can fail through
cancellation.  We give a necessary and sufficient condition for equality.  The
only obstruction is dominant cancellation in the principal minors that give
the essential coefficients of the tropical characteristic polynomial.  In
other words, equality holds exactly when the classical and tropical Newton
polygons agree at the breakpoints determined by the diagonal entries.  We
describe the exceptional lifts by explicit polynomial equations in the
leading coefficients.  We also study these equations on each irreducible
component of the admissible parameter space.  Finally, we give a complete
$3\times3$ criterion and a family that contains both ordinary lifts with
spectral equality and exceptional totally nonnegative lifts without it.
\end{abstract}

\maketitle

\section{Introduction}

The max-plus semifield is
\[
 \Rmax=\mathbb{R}\cup\{-\infty\},
 \qquad
 a\oplus b=\max\{a,b\},
 \qquad
 a\odot b=a+b.
\]
Its additive and multiplicative identities are, respectively,
$\mathbbm{o}=-\infty$ and \(\mathbbm{1}=0\).  Matrices with entries in \(\Rmax\)
are called tropical matrices; for background on max-plus algebra and its
matrix theory, see Butkovi\v{c}~\cite{Butkovic}.  Tropical analogues of total positivity and
total nonnegativity were developed systematically by Gaubert and
Niv~\cite{GaubertNiv}; their work connects tropical minors with parity,
non-Archimedean valuation, Monge matrices, and tropical spectral theory.

Our goal is to find exactly when the known spectral majorization becomes an
equality for totally nonnegative lifts.  We first recall the facts that we need
about valued fields, tropical total nonnegativity, Monge matrices, and tropical
eigenvalues.  We then state weak majorization and its logarithmic form.  The
main result shows that equality holds when there is no dominant cancellation
at the breakpoints of the tropical Newton polygon.  Tropical
sign-nonsingularity gives a useful sufficient condition, but it is not
necessary.

Akian, Bapat, and Gaubert proved generic equality between the
valuations of classical eigenvalues and tropical roots for matrix
polynomials with unrestricted leading coefficients
\cite[Theorem~7.4]{AkianBapatGaubert}. Their result is stated in the
min-plus setting. By using
\(\operatorname{val}=-\nu\), it can be written in the max-plus setting
used in this paper. Gaubert and Niv also proved spectral equality for
totally positive lifts whose tropicalizations are tropically totally
positive; see \cite[Proposition~6.5]{GaubertNiv}.

Here we consider totally nonnegative lifts of a fixed tropically totally
nonnegative matrix \(A\). Thus our setting lies between the totally
positive case of Gaubert and Niv and the unrestricted setting of Akian,
Bapat, and Gaubert. In this setting, spectral equality does not always
hold. It may fail because of cancellation among the dominant terms,
even when the lift is totally nonnegative.

Our main result gives a necessary and sufficient condition for spectral
equality. It shows that we only need to check the strict vertices of the
tropical Newton polygon. At each such vertex, the obstruction is the
cancellation of the dominant terms in the relevant principal minors.
We describe the exceptional lifts by polynomial equations in the
leading coefficients. We also study these equations on each
irreducible component of the admissible parameter space. When the
component ideals are known, this gives an ideal-membership test.
Finally, for \(3\times3\) matrices, we give a complete explicit
criterion and a non-diagonal family containing both totally
nonnegative lifts with spectral equality and exceptional lifts for
which equality fails.

Throughout the paper, bold letters such as \(\mathbf A\) denote matrices over
the valued field \(\K\), while ordinary letters such as \(A\) denote their
tropicalizations.

\section{Valuations and tropical total nonnegativity}
We first introduce the non-Archimedean setting and tropical total
nonnegativity.  We then give the Monge characterization and connect it to
totally nonnegative lifts over the valued field.
In this section, we use the same notation as in~\cite{GaubertNiv}.

Let \(\Lambda\subset\mathbb{R}\) be empty, finite, or the set of elements of a
sequence decreasing to \(-\infty\). A generalized Puiseux series has the form
\[
 \boldsymbol f=\sum_{\lambda\in\Lambda}a_\lambda t^\lambda,
 \qquad a_\lambda\in\mathbb{R}\setminus\{0\}.
\]
The field of such series will be denoted by \(\mathbb{K}\); see
Markwig~\cite{Markwig}. 
Its algebraic closure is the quadratic extension 
\(\overline{\mathbb{K}}=\mathbb{K}(\sqrt{-1})\), which is algebraically
closed, as shown in~\cite{Markwig}.  The valuation extends naturally to
\(\overline{\mathbb{K}}\), and we use the same notation \(\val\) for this
extension.

For a nonzero series, define
\[
 \val(\boldsymbol f)=\sup\{\lambda:a_\lambda\ne0\},
 \qquad
 \val(0)=-\infty.
\]

\begin{definition}
A non-Archimedean valuation on a field \(\K\) is a map
\(\val:\K\to\Rmax\) such that, for all \(f,g\in\K\),
\begin{enumerate}[label=\textup{(\roman*)}]
 \item \(\val(f+g)\leq\max\{\val(f),\val(g)\}\);
 \item \(\val(fg)=\val(f)+\val(g)\);
 \item \(\val(f)=-\infty\) if and only if \(f=0\).
\end{enumerate}
\end{definition}

The leading coefficient of a nonzero generalized Puiseux series is the
coefficient corresponding to its largest exponent.  The series is positive if
its leading coefficient is positive.  We write \(\K_{>0}\) for the positive
series and \(\K_{\geq0}=\K_{>0}\cup\{0\}\).

The next elementary observation records how the valuation behaves on
nonnegative series.
\begin{proposition}
The restriction \(\val:\K_{\geq0}\to\Rmax\) is a semifield
homomorphism.  In particular,
\[
 \val(f+g)=\val(f)\oplus\val(g),
 \qquad
 \val(fg)=\val(f)\odot\val(g)
 \quad (f,g\in\K_{\geq0}).
\]
Also, \(\val(f^{-1})=-\val(f)\) for every nonzero \(f\in\K\).
\end{proposition}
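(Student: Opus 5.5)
The proof will work directly with leading terms. For \(f,g\in\K_{\geq0}\), the degenerate cases are handled first: if \(f=0\) or \(g=0\), both identities are immediate, because \(\val(0)=-\infty=\mathbbm{o}\) is the neutral element for \(\oplus\) and absorbing for \(\odot\). So assume \(f,g\in\K_{>0}\). Write
\[
f=a t^{\alpha}+(\text{terms of exponent}<\alpha),\qquad g=b t^{\beta}+(\text{terms of exponent}<\beta),
\]
with \(a,b>0\), so that \(\val(f)=\alpha\) and \(\val(g)=\beta\).

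For the product, the exponents of \(fg\) are sums \(\lambda+\mu\) with \(\lambda\in\Lambda_f\), \(\mu\in\Lambda_g\). Since each support is either finite or a sequence decreasing to \(-\infty\), the coefficient of each exponent is a finite sum. The largest exponent \(\alpha+\beta\) arises only from the single pair \((\alpha,\beta)\), so its coefficient is \(ab\neq0\). Hence \(\val(fg)=\alpha+\beta\), and the leading coefficient \(ab\) is positive, so \(fg\in\K_{>0}\). This argument does not use positivity of \(a\) and \(b\), so it proves \(\val(fg)=\val(f)+\val(g)\) for all nonzero \(f,g\in\K\); this is axiom (ii).

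For the sum, suppose without loss of generality that \(\alpha\geq\beta\). If \(\alpha>\beta\), the leading term of \(f+g\) is \(at^{\alpha}\). If \(\alpha=\beta\), the coefficient of \(t^{\alpha}\) in \(f+g\) is \(a+b\). This is the one place where nonnegativity is needed, and it is the only real content of the statement: the sign condition rules out cancellation of the leading terms, since \(a+b>0\). In both cases \(\val(f+g)=\max\{\alpha,\beta\}=\val(f)\oplus\val(g)\), and the leading coefficient of \(f+g\) is positive, so \(f+g\in\K_{>0}\). Thus \(\K_{\geq0}\) is closed under both operations, and \(\val\) sends \(0\mapsto\mathbbm{o}\) and \(1\mapsto 0=\mathbbm{1}\).

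For inverses, \(\K_{\geq0}\) is also closed under inversion of nonzero elements, because the inverse of \(at^{\alpha}(1+h)\), where \(\val(h)<0\), is \(a^{-1}t^{-\alpha}(1+h)^{-1}\) with leading coefficient \(a^{-1}>0\). The identity \(\val(f^{-1})=-\val(f)\) for every nonzero \(f\in\K\) then follows from multiplicativity applied to \(f\cdot f^{-1}=1\): we get \(\val(f)+\val(f^{-1})=\val(1)=0\), and \(\val(f)\) is finite because \(f\neq0\). Together with the previous paragraphs, this shows that \(\val:\K_{\geq0}\to\Rmax\) is a semifield homomorphism.

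The only mildly delicate step is the finiteness of coefficients in products, and hence the well-definedness of multiplication in \(\K\). I would take this as part of the cited fact that \(\K\) is a field (Markwig) rather than reprove it.
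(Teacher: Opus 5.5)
Your proposal is correct and follows the same route as the paper: no cancellation of positive leading terms for the sum, multiplicativity of the valuation for the product, and $0=\val(1)=\val(f)+\val(f^{-1})$ for inverses. You add details the paper's three-line proof leaves implicit, namely the zero cases, the explicit leading coefficients, and closure of $\K_{\geq0}$ under the operations.
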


\begin{proof}
Positive leading terms cannot cancel, which gives the first equality.  The
second is the multiplicative property of the valuation.  Finally,
\(0=\val(1)=\val(f)+\val(f^{-1})\).
\end{proof}

If \(\mathbf A=(\mathbf A_{ij})\in\K^{m\times n}\), its entrywise
tropicalization is
\[
 A=\val(\mathbf A):=(\val(\mathbf A_{ij}))\in\Rmax^{m\times n}.
\]
Conversely, \(\mathbf A\) is called a lift of \(A\) if this equality holds.

For \(A\in\Rmax^{n\times n}\), its tropical permanent is
\[
 \per(A)=\bigoplus_{\sigma\in S_n}
       \bigodot_{i=1}^n A_{i,\sigma(i)}
 =\max_{\sigma\in S_n}\sum_{i=1}^n A_{i,\sigma(i)}.
\]
The sum associated with \(\sigma\) is called the weight of \(\sigma\).
A square tropical matrix is \emph{tropically sign-nonsingular} if its
permanent is finite and all maximizing permutations have the same parity.

The following proposition gives a basic condition that prevents cancellation
among the dominant terms of a determinant.
\begin{proposition}\label{prop:det-per}
Let \(A\in\Rmax^{n\times n}\) be tropically sign-nonsingular, and let
\(\mathbf A\in\K^{n\times n}\) be a lift whose nonzero entries have leading
coefficients of one common sign.  Then
\[
 \val(\det\mathbf A)=\per(A).
\]
\end{proposition}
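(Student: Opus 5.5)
Expand the determinant by the Leibniz formula,
\[
 \det\mathbf A=\sum_{\sigma\in S_n}\operatorname{sgn}(\sigma)\prod_{i=1}^n\mathbf A_{i,\sigma(i)},
\]
and compare each term with its tropical weight. For a permutation \(\sigma\), the product \(\mathbf A_{\sigma}:=\prod_i\mathbf A_{i,\sigma(i)}\) is either \(0\) or nonzero. If it is nonzero, its valuation is \(\sum_i A_{i,\sigma(i)}\), by multiplicativity of \(\val\). Its leading coefficient is the product of the leading coefficients of the factors. Let \(\varepsilon\in\{\pm1\}\) be the common sign of the leading coefficients of the nonzero entries. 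Then every nonzero \(\mathbf A_\sigma\) has leading coefficient of sign \(\varepsilon^n\). This sign does not depend on \(\sigma\).

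Split \(S_n\) into the set \(M\) of maximizing permutations, those of weight \(\per(A)\), and its complement. Since \(\per(A)\) is finite, \(M\neq\emptyset\), and every \(\sigma\in M\) has all entries \(A_{i,\sigma(i)}\) finite. Hence \(\mathbf A_\sigma\neq0\) and \(\val(\mathbf A_\sigma)=\per(A)\). For \(\sigma\notin M\), either \(\mathbf A_\sigma=0\) or \(\val(\mathbf A_\sigma)<\per(A)\). By the ultrametric inequality (i), the sum of these terms therefore has valuation strictly less than \(\per(A)\), or is zero.

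The main step is to show that the dominant part \(\sum_{\sigma\in M}\operatorname{sgn}(\sigma)\mathbf A_\sigma\) does not cancel. By tropical sign-nonsingularity, all \(\sigma\in M\) have the same sign \(s\). The dominant part is thus \(s\sum_{\sigma\in M}\mathbf A_\sigma\). Each summand has valuation \(\per(A)\) and a leading coefficient of sign \(\varepsilon^n\). The coefficient of \(t^{\per(A)}\) in the sum is therefore a sum of nonzero reals of one sign, which is nonzero. Equivalently, \(\varepsilon^n\sum_{\sigma\in M}\mathbf A_\sigma\in\K_{>0}\), and the Proposition on \(\K_{\ge0}\) applies. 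So the dominant part has valuation exactly \(\per(A)\).

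Finally, adding the remaining terms, all of strictly smaller valuation, does not change the coefficient of \(t^{\per(A)}\). Hence \(\val(\det\mathbf A)=\per(A)\). The only delicate point is the sign bookkeeping in the previous step: the common parity controls \(\operatorname{sgn}(\sigma)\), and the common entry sign controls the leading coefficients.
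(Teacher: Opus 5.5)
Your proof is correct and follows the same route as the paper. Both use the Leibniz expansion, argue that the maximizing terms cannot cancel because their permutation signs agree (by sign-nonsingularity) and their leading coefficients all have sign $\varepsilon^n$, and observe that all remaining terms have strictly smaller valuation; you simply make explicit the bookkeeping the paper leaves implicit (why $M\neq\emptyset$, why the maximizing products are nonzero, and the ultrametric bound on the remaining terms).
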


\begin{proof}
Expand \(\det\mathbf A\) over \(S_n\).  Terms corresponding to maximizing
permutations have valuation \(\per(A)\).  Since these permutations have the
same parity and the relevant leading coefficients have one common sign, their
leading terms have a common sign and cannot cancel.  If the common entry sign
is negative, every permutation product acquires the same factor $(-1)^n$.
Every other term has strictly
smaller valuation.  The conclusion follows.
\end{proof}

\begin{remark}
The common-sign assumption includes both positive and negative leading
coefficients.  The exact condition is weaker: the sum of the leading
coefficients of the maximizing determinant terms must be nonzero; see
Lemma~\ref{lem:dominant-cancellation}.
\end{remark}

\begin{definition}
A tropical minor is \emph{tropically positive} if all its maximizing
permutations are even.  It is \emph{tropically nonnegative} if its maximizing
permutations are all even or if the minor is tropically sign-singular.  A
matrix is tropically totally positive, written \(A\in\TPT\), if its entries
are finite and all its minors are tropically positive.  It is tropically
totally nonnegative, written \(A\in\TNT\), if its entries
are in $\Rmax$ and all its minors are tropically
nonnegative.
\end{definition}

The corresponding classical classes are denoted by \(\TP\) and \(\TN\); see
Fallat and Johnson~\cite{FallatJohnson}.  We use \(\TPT_k\) and \(\TNT_k\)
for the tropical classes defined by minors of order at most \(k\).

The following reduction to minors of order two is
\cite[Theorem~3.4]{GaubertNiv}.
\begin{theorem}\label{thm:two-by-two}
For tropical matrices of arbitrary size,
\[
 \TPT=\TPT_2,
 \qquad
 \TNT=\TNT_2.
\]
\end{theorem}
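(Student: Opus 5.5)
The inclusions \(\TPT\subseteq\TPT_2\) and \(\TNT\subseteq\TNT_2\) hold by definition, so the whole task is the reverse inclusions. I would first turn the order-two conditions into Monge-type inequalities. Then I would use an uncrossing argument on permutations to handle minors of every order.

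\emph{Step 1: the \(2\times2\) conditions.} Take rows \(i<j\) and columns \(k<l\). The minor \(A[\{i,j\},\{k,l\}]\) has two permutations: the identity, which is even, and the transposition, which is odd.
\begin{itemize}
\item For \(A\in\TPT_2\), tropical positivity of this minor says exactly that \(A_{ik}+A_{jl}>A_{il}+A_{jk}\), with all entries finite.
\item For \(A\in\TNT_2\), tropical nonnegativity says that either the identity is the unique maximizer, or both permutations are maximizers, or the permanent is \(-\infty\). In every case \(A_{ik}+A_{jl}\ge A_{il}+A_{jk}\) in \(\Rmax\).
\end{itemize}
Conversely, the weak inequality implies that the minor is tropically nonnegative. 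This is the (strict, resp. weak) Monge property in max-plus form. It is inherited by every submatrix, since row and column order are preserved when passing to a submatrix.

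\emph{Step 2: uncrossing.} Fix a square submatrix, reindexed as \(1,\dots,m\) in both rows and columns. Let \(\sigma\) be a permutation of finite weight that has an inversion \(i<j\) with \(\sigma(i)>\sigma(j)\). Put \(\tau=\sigma\circ(i\,j)\). Apply Step 1 with rows \(i<j\) and columns \(\sigma(j)<\sigma(i)\):
\[
A_{i\sigma(j)}+A_{j\sigma(i)}\;\ge\;A_{i\sigma(i)}+A_{j\sigma(j)},
\]
and the inequality is strict in the \(\TPT_2\) case. The right-hand side is finite, so both entries on the left are finite. Hence \(\tau\) has finite weight, and \(\mathrm{wt}(\tau)\ge\mathrm{wt}(\sigma)\), strictly in the \(\TPT_2\) case. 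Moreover \(\tau\) has strictly fewer inversions than \(\sigma\) and the opposite parity.

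\emph{Step 3: conclusion.}
\begin{itemize}
\item In the \(\TPT_2\) case, every non-identity permutation can be strictly improved by Step 2. So the identity is the unique maximizer of every minor, and every minor is tropically positive.
\item In the \(\TNT_2\) case, suppose the permanent of a minor is finite and an odd permutation \(\sigma\) is maximizing. Since \(\sigma\) is odd it is not the identity, so it has an inversion. One uncrossing step produces a maximizer \(\tau\) of weight at least \(\mathrm{wt}(\sigma)\), hence equal to it, and \(\tau\) is even. Thus the minor has maximizers of both parities, so it is tropically sign-singular.
\item Otherwise, either the permanent is \(-\infty\), which is also sign-singular, or all maximizers are even.
\end{itemize}
In all cases every minor is tropically nonnegative.

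The main obstacle is the bookkeeping with \(-\infty\) entries in the \(\TNT\) case. One has to check two things. First, the \(2\times2\) condition genuinely yields the weak inequality in \(\Rmax\) when some entries are \(-\infty\). Second, finiteness propagates to the uncrossed permutation. Both follow from the displayed inequality, because its right-hand side is finite.
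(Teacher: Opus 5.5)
Your proof is correct. The paper gives no argument of its own for this statement; it imports it verbatim from \cite[Theorem~3.4]{GaubertNiv}. So there is no internal proof to match, and your exchange (uncrossing) argument supplies a self-contained proof.

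Two features of your route are worth noting. First, it handles $-\infty$ entries directly, with no perturbation or approximation step: finiteness of $A_{i\sigma(i)}+A_{j\sigma(j)}$ forces the swapped pair to be finite. Second, it is more economical than it looks. In the $\TPT$ case a maximality argument suffices, since a maximizer cannot be strictly improved and so must be the identity. In the $\TNT$ case a single exchange step suffices. Hence the inversion count is not actually needed for the theorem itself.

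That count does buy something extra, though. Iterating the exchange from any maximizer down to the identity shows that the identity is an optimal permutation of every square submatrix of a matrix in $\TNT$. This is exactly the fact behind Theorem~\ref{thm:diagonal-spectrum}. It is also behind the paper's later remark, attached to the $3\times3$ table, that the identity maximizes every principal submatrix.
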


Thus tropical total positivity and nonnegativity can be tested using only
\(2\times2\) minors.  This is one of the principal simplifications of the
tropical theory.

A real $m\times n$ matrix $A$ is a \emph{Monge matrix} if
\begin{equation}\label{eq:Monge}
 A_{ij}+A_{i'j'}\geq A_{ij'}+A_{i'j}
 \qquad(i<i',\ j<j').
\end{equation}
By Theorem~\ref{thm:two-by-two}, a finite tropical matrix is tropically
totally nonnegative if and only if it is Monge.  Thus
\eqref{eq:Monge} provides the numerical test used below.  We will also use
the following valuation relation without recalling the staircase and support
descriptions, which play no role in the spectral arguments; see
\cite[Section~5]{GaubertNiv}.

The following comparison of the classical and tropical classes, together with
the lift characterization of \(\mathrm{TP}^{\mathrm{trop}}\), follows from
\cite[Theorems~5.2 and 5.3]{GaubertNiv}.
\begin{theorem}\label{thm:valuation-classes}
 For matrices of size at least \(2\times2\),
\[
 \TPT\subsetneqq\val(\TP)
 \subsetneqq\val(\TN)
 \subseteq\TNT.
\]
Also, a tropical matrix $A$ belongs to $\TPT$ if and only if every lift
of $A$ with positive leading coefficients is totally positive.
\end{theorem}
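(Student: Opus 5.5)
The plan is to handle the four parts of Theorem~\ref{thm:valuation-classes} separately: the three inclusions, the two strict inclusions, and the lift characterization. Throughout, the sign of a minor of a lift is read off from the leading coefficient of its determinant expansion, as in Proposition~\ref{prop:det-per}.

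\emph{Step 1: \(\TPT\subseteq\val(\TP)\) and the ``only if'' half of the lift characterization.} Let \(A\in\TPT\) and let \(\mathbf A\) be any lift with positive leading coefficients; for instance \(\mathbf A_{ij}=t^{A_{ij}}\). Take any square submatrix. Its maximizing permutations are all even, and the corresponding products have positive leading coefficients. Hence their leading coefficients add up to a strictly positive number. All other terms have strictly smaller valuation, so the minor lies in \(\K_{>0}\). Thus every such lift is totally positive, which gives both the inclusion and this half of the characterization.

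\emph{Step 2: the remaining inclusions.} The inclusion \(\val(\TP)\subseteq\val(\TN)\) is trivial. For \(\val(\TN)\subseteq\TNT\), Theorem~\ref{thm:two-by-two} means it suffices to check \(2\times2\) minors. Let \(\mathbf a,\mathbf b,\mathbf c,\mathbf d\in\K_{\ge0}\) be the entries of a \(2\times 2\) submatrix of a \(\TN\) lift, with valuations \(a,b,c,d\), and suppose the tropical minor is not tropically nonnegative. This can only mean that the unique maximizing permutation is odd, i.e.\ \(b+c>a+d\). Then \(\val(\mathbf b\mathbf c)>\val(\mathbf a\mathbf d)\), so the leading coefficient of \(\mathbf a\mathbf d-\mathbf b\mathbf c\) is \(-\lc(\mathbf b)\lc(\mathbf c)<0\). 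This contradicts \(\mathbf a\mathbf d-\mathbf b\mathbf c\ge0\). The value \(-\infty\) is handled automatically, since a zero entry simply kills its term.

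\emph{Step 3: strictness, for all sizes \(m,n\ge2\).} For \(\TPT\ne\val(\TP)\), take a real TP matrix, such as a submatrix of a generalized Vandermonde matrix, viewed as constant series. Its valuation is the zero matrix. Every \(2\times2\) minor of the zero matrix has both permutations maximizing, one of them odd, so it is not in \(\TPT\). For \(\val(\TP)\ne\val(\TN)\), take an \(m\times n\) leading block of the lower-triangular Pascal matrix. It is real TN and has a zero entry, so its valuation has a \(-\infty\) entry. A TP matrix has all entries positive, so this valuation is not in \(\val(\TP)\).

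\emph{Step 4: the ``if'' half of the lift characterization, which I expect to be the delicate point.} Suppose \(A\notin\TPT\); I will construct a lift with positive leading coefficients that is not TP. If some \(A_{ij}=-\infty\), every lift has a zero entry and so is not TP. Otherwise, by Theorem~\ref{thm:two-by-two}, some \(2\times2\) minor has an odd maximizing permutation, so \(b+c\ge a+d\). Set \(\mathbf A_{ij}=\lambda_{ij}t^{A_{ij}}\) with \(\lambda_{ij}>0\). The leading coefficient of \(\mathbf a\mathbf d-\mathbf b\mathbf c\) is then \(-\lambda_b\lambda_c\) when \(b+c>a+d\). When \(b+c=a+d\) it is \(\lambda_a\lambda_d-\lambda_b\lambda_c\). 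Choosing \(\lambda_b=\lambda_c=2\) and all other coefficients equal to \(1\) makes this leading coefficient strictly negative. Hence the minor is negative and the lift is not TP. The only care needed is to avoid an exact cancellation of the leading coefficient, which would force looking at lower-order terms; taking a strict inequality among the \(\lambda\)'s sidesteps this.
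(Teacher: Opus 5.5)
Your proof is correct, but it takes a different route from the paper, which gives no argument for this statement. The paper simply imports it from \cite[Theorems~5.2 and~5.3]{GaubertNiv}. There the inclusions come out of a finer analysis of the images of the valuation; in particular, $\val(\TP)$ is identified with the finite Monge matrices, the non-strict counterpart of the strict inequalities that characterize $\TPT$.

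You instead give a self-contained leading-term argument:
\begin{enumerate}[label=\textup{(\alph*)}]
 \item Step~1 is the same no-cancellation principle as Proposition~\ref{prop:det-per}.
 \item Steps~2 and~4 use only the $2\times2$ reduction of Theorem~\ref{thm:two-by-two} and the sign of the leading coefficient of a single $2\times2$ minor.
 \item The two strict inclusions are witnessed by explicit real matrices viewed as constant series.
\end{enumerate}
So you never need to know exactly what $\val(\TP)$ is. This gives transparency and independence from the external reference. What it does not give is the sharper description of the intermediate classes, which the statement as quoted does not require.

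A few small remarks.
\begin{enumerate}[label=\textup{(\roman*)}]
 \item In Step~4 the concern about exact cancellation is unnecessary. Already for the lift $(t^{A_{ij}})$, a tie $b+c=a+d$ gives a zero minor, and that alone violates total positivity.
 \item In the case where $A$ has a $-\infty$ entry, say explicitly that a lift with positive leading coefficients exists (for example $t^{A_{ij}}$, with zeros at the $-\infty$ positions). This shows the universal statement is false rather than vacuous.
 \item For $\val(\TP)\neq\val(\TN)$, the zero matrix would serve as well as the Pascal block.
\end{enumerate}
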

\section{Tropical characteristic polynomials and eigenvalues}
In this section, we introduce tropical characteristic polynomials and tropical
eigenvalues. We describe their properties and compare them with
the eigenvalues of totally nonnegative lifts. 

Following \cite[Section~6]{GaubertNiv}, let \(A\in\Rmax^{n\times n}\). The tropical characteristic polynomial of $A$ is defined by
\begin{equation}\label{xx}
 f_A(x)=\per(x\odot I\oplus A)
       =x^{\odot n}\oplus
        \bigoplus_{i=0}^{n-1}a_{n-i}\odot x^{\odot i}, 
\end{equation}
where \(a_k\) is the maximum tropical permanent among all \(k\times k\)
principal submatrices of \(A\), and \(a_0=\mathbbm{1}\).

Just as the fundamental theorem of algebra allows the factorization of
classical polynomials into linear factors, tropical polynomials admit an
analogous factorization.  Specifically, Agudelo and
Lorscheid~\cite{Ag} prove that the irreducible tropical polynomials are
precisely the linear tropical polynomials, and every tropical polynomial has
a unique factorization into irreducibles.
\begin{definition}
A \emph{max-plus eigenvalue} is a scalar \(\lambda\in\Rmax\) for which there
exists a nonzero tropical vector \(v\) satisfying
\[
 A\odot v=\lambda\odot v.
\]
\end{definition}
Tropical roots of a tropical polynomial can be defined by several equivalent
approaches.  In this work, we adopt the approach from~\cite{peper}.
\begin{definition}
Let
$
f_A(x)
$
be the tropical characteristic polynomial of $A$.
A number \(r\in\mathbb{R}\) is called a \emph{tropical root} of \(f_A\) if the maximum in
\eqref{xx} is attained at least twice; that is, there exist distinct indices
\(i,j\in\{0,\dots,n\}\) such that
\[
a_{n-i}+i r=a_{n-j}+j r\ge a_{n-k}+k r
\qquad\text{for all } k\in\{0,\dots,n\}.
\]
The multiplicity of the root \(r\) is
\[
\operatorname{mult}(r)
=
\max\Bigl\{|i-j|\;\Bigm|\;
a_{n-i}+i r=a_{n-j}+j r=f_A(r)
\Bigr\}.
\]
We also count \(-\infty\) as a root with multiplicity \(l\) whenever
\[
a_n=a_{n-1}=\cdots=a_{n-l+1}=-\infty,
\qquad a_{n-l}\neq-\infty.
\]
\end{definition}
The next result of Gaubert and Niv shows that, for a matrix in \(\TNT\), both
the tropical characteristic coefficients and the tropical eigenvalues are
completely determined by the diagonal entries; see
\cite[Propositions~6.1 and~6.3]{GaubertNiv}.
\begin{theorem}\label{thm:diagonal-spectrum}
Let \(A\in\TNT\).  Then the coefficient \(a_k\) of \(f_A\) is the tropical
product of the \(k\) largest diagonal entries of \(A\).  Therefore, the
tropical eigenvalues of \(A\), counted with multiplicity, are its
diagonal entries.
\end{theorem}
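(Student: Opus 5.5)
We need to prove Theorem~\ref{thm:diagonal-spectrum}: for \(A\in\TNT\), the coefficient \(a_k\) is the tropical product of the \(k\) largest diagonal entries, and hence the tropical eigenvalues are the diagonal entries. The plan has two parts: a permanent bound for principal submatrices, and then reading the roots off the coefficients.

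\textbf{Step 1: principal permanents.} Fix a principal submatrix \(A[I,I]\) with \(|I|=k\). Since \(\TNT\) is hereditary under taking submatrices, \(A[I,I]\in\TNT\). It suffices to show
\[
\per(A[I,I])=\sum_{i\in I}A_{ii}.
\]
That is, the identity permutation attains the maximum. The inequality \(\ge\) is trivial.

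For \(\le\), take a permutation \(\sigma\neq\mathrm{id}\) of \(I\). Choose an inversion of \(\sigma\), i.e.\ indices \(i<i'\) with \(\sigma(i)>\sigma(i')\). Use the \(2\times2\) minor on rows \(i,i'\) and columns \(\sigma(i'),\sigma(i)\). By Theorem~\ref{thm:two-by-two} this minor is tropically nonnegative. So either
\[
A_{i\sigma(i')}+A_{i'\sigma(i)}\ \ge\ A_{i\sigma(i)}+A_{i'\sigma(i')},
\]
with the even (diagonal) term maximal, or the minor is sign-singular. In the sign-singular case both terms are equal, or both are \(-\infty\); either way the displayed inequality still holds.

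Composing \(\sigma\) with this transposition gives a permutation with strictly fewer inversions and weight at least that of \(\sigma\). Inducting on the number of inversions yields \(\mathrm{weight}(\sigma)\le\mathrm{weight}(\mathrm{id})\).

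The main care is the \(-\infty\) entries. If the weight of \(\sigma\) is \(-\infty\), the claim is trivial. Otherwise \(A_{i\sigma(i)}\) and \(A_{i'\sigma(i')}\) are finite, and tropical nonnegativity forbids the odd term from strictly dominating. So the uncrossing step is valid in all cases.

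\textbf{Step 2: the coefficients.} By Step 1,
\[
a_k=\max_{|I|=k}\sum_{i\in I}A_{ii}.
\]
This maximum is the sum of the \(k\) largest diagonal entries \(d_1\ge\dots\ge d_n\), which is the first claim.

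\textbf{Step 3: the roots.} Now
\[
f_A(x)=\max_i\Bigl(\sum_{j\le n-i}d_j+i x\Bigr)=\bigodot_{j=1}^n (x\oplus d_j).
\]
To check this identity, expand the right-hand side: choosing \(x\) in \(i\) factors and diagonal entries in the rest, the best choice uses the \(n-i\) largest \(d_j\). This matches the coefficients from Step 2.

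The coefficient sequence \(a_k=d_1+\dots+d_k\) is concave in \(k\), since its increments \(d_k\) are nonincreasing. So every coefficient is essential. The corner points of the tropical polynomial occur exactly at \(x=d_j\), with multiplicity equal to the number of repetitions of that value.

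For the \(-\infty\) entries: if \(d_{n-l+1}=\dots=d_n=-\infty\), then \(a_n=\dots=a_{n-l+1}=-\infty\) and \(a_{n-l}\neq-\infty\). This matches the convention that \(-\infty\) is a root of multiplicity \(l\).

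Hence the tropical eigenvalues, counted with multiplicity, are exactly the diagonal entries.
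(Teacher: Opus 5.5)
Your proof is correct and follows essentially the same route as the result's source. The paper gives no argument of its own and cites Gaubert--Niv (Propositions 6.1 and 6.3), but your key step is the one the paper itself invokes later: the $2\times2$ Monge inequalities force the identity permutation to maximize every principal permanent, after which the roots are read off the concave sequence $a_k=d_1+\cdots+d_k$, equivalently the factorization $f_A=\bigodot_j(x\oplus d_j)$. The appeal to Theorem~\ref{thm:two-by-two} is unnecessary, since the $2\times2$ minors of a matrix in $\TNT$ are tropically nonnegative directly by definition.
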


The following example distinguishes the algebraic tropical roots of the
characteristic polynomial from the max-plus eigenvalues of the matrix.
\begin{example}
Let
\[
A=\begin{pmatrix}
1&-\infty&-\infty\\
2&-\infty&-\infty\\
-\infty&4&5
\end{pmatrix}.
\]
The tropical characteristic polynomial is
\[
f_A(x)=x^{\odot 3}\oplus 5\odot x^{\odot 2}\oplus 6\odot x\oplus (-\infty).
\]

The tropical roots are \(5\), \(1\), and \(-\infty\):
\begin{itemize}
    \item At \(x=5\), the maximum \(15\) is attained by \(3x\) and \(2x+5\).
    \item At \(x=1\), the maximum \(7\) is attained by \(2x+5\) and \(x+6\).
    \item Since \(a_3=-\infty\) and \(a_2=6\ne -\infty\), \(-\infty\) is a root of multiplicity \(1\).
\end{itemize}
On the other hand, only \(5\) is a max-plus eigenvalue; neither \(1\) nor \(-\infty\) is.
\end{example}
We next compare classical eigenvalues over \(\K\) with tropical
eigenvalues. Related asymptotic formulae connecting max-algebraic eigenvalues
with distinguished classical eigenvalues of nonnegative matrices were obtained
in~\cite{ManjeganiPointSpectrum}. Here we use the following result of Gaubert
and Niv~\cite[Proposition~6.5]{GaubertNiv}.
\begin{theorem}\label{thm:eigenvalue-valuation}
Let \(\mathbf A\in\TP(\K)\), put \(A=\val(\mathbf A)\), and assume
\(A\in\TPT\).  Write
\[
 p_{\mathbf A}(\lambda)
 =\lambda^n+\sum_{k=1}^{n}(-1)^k
   \boldsymbol\alpha_k\lambda^{n-k}
\]
and let $a_k$ be the coefficient of $x^{\odot(n-k)}$ in $f_A$, with
$\boldsymbol\alpha_0=1$ and $a_0=0$.  Let
\(\rho_1,\ldots,\rho_n\) be the eigenvalues of \(\mathbf A\), counted with
algebraic multiplicity and indexed so that
\[
 \val(\rho_1)\geq\cdots\geq\val(\rho_n),
\]
and let
\(\eta_1\geq\cdots\geq\eta_n\) be the tropical eigenvalues of
\(A\).  Then
\[
 \val(\boldsymbol\alpha_k)=a_k,
 \qquad k=0,\ldots,n,
\]
and
\[
 \val(\rho_i)=\eta_i,
 \qquad i=1,\ldots,n.
\]
\end{theorem}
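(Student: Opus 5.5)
We prove the two claims in order: first the coefficient identities \(\val(\boldsymbol\alpha_k)=a_k\), then the eigenvalue identities \(\val(\rho_i)=\eta_i\), which we obtain from the coefficients by a Newton polygon argument.

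\emph{Coefficients.} Classically, \(\boldsymbol\alpha_k\) is the sum of all \(k\times k\) principal minors \(\det\mathbf A[I,I]\) with \(|I|=k\). Since \(\mathbf A\in\TP(\K)\), each such minor lies in \(\K_{>0}\). By the proposition that \(\val\) is a semifield homomorphism on \(\K_{\geq0}\),
\[
\val(\boldsymbol\alpha_k)=\max_{|I|=k}\val(\det\mathbf A[I,I]).
\]
We then show that each principal minor satisfies \(\val(\det\mathbf A[I,I])=\per(A[I,I])\). Every principal submatrix of a matrix in \(\TPT\) is again in \(\TPT\), so \(A[I,I]\) has finite permanent and all of its maximizing permutations are even. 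In particular \(A[I,I]\) is tropically sign-nonsingular. The entries of \(\mathbf A\) are positive \(1\times1\) minors, so their leading coefficients are all positive. Proposition~\ref{prop:det-per} therefore gives \(\val(\det\mathbf A[I,I])=\per(A[I,I])\). Taking the maximum over \(I\) yields \(\val(\boldsymbol\alpha_k)=a_k\) by the definition of \(a_k\) in \eqref{xx}. The cases \(k=0\) are trivial.

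\emph{Eigenvalues.} We pass from coefficients to roots with the non-Archimedean Newton polygon theorem over the algebraically closed field \(\overline{\K}\). For \(p_{\mathbf A}\), the multiset \(\{\val(\rho_i)\}\) equals the set of slopes of the upper concave hull of the points \((n-k,\val(\boldsymbol\alpha_k))\), each slope counted with the horizontal length of its edge. The signs \((-1)^k\) do not affect valuations. The tropical roots of \(f_A\), with the multiplicities defined above, are given by exactly the same hull of the points \((n-k,a_k)\). This is the standard duality between the corner locus of a max-plus polynomial and its Newton polygon. Since \(\val(\boldsymbol\alpha_k)=a_k\) for every \(k\), the two polygons coincide, and so the multisets \(\{\val(\rho_i)\}\) and \(\{\text{tropical roots of }f_A\}\) are equal.

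It remains to identify the tropical roots with the \(\eta_i\). By Theorem~\ref{thm:diagonal-spectrum}, the tropical roots of \(f_A\) are the diagonal entries of \(A\), counted with multiplicity. Indeed, \(a_k\) is the sum of the \(k\) largest diagonal entries \(d_1\ge\cdots\ge d_n\). Hence the successive differences \(a_k-a_{k-1}=d_k\) are nonincreasing, the sequence \((a_k)\) is already concave, and the slopes of the hull are exactly \(d_1,\dots,d_n\). Sorting both sides in decreasing order gives \(\val(\rho_i)=\eta_i\).

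\emph{Main obstacle.} The key step is the no-cancellation claim for each principal minor. This needs two facts: that \(\TPT\) is closed under taking principal submatrices, which follows directly from the definition since minors of a submatrix are minors of \(A\); and that the hypothesis \(\mathbf A\in\TP\) makes every leading coefficient positive. The Newton polygon step is standard, provided we include the valuation extension to \(\overline{\K}\) recalled earlier. Finiteness of all entries rules out roots at \(-\infty\), so no separate treatment of infinite roots is needed.
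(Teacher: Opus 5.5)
Your argument is correct. The paper itself only cites Gaubert--Niv for this result, but your proof is exactly the specialization of the paper's own Section~5 route: no cancellation among the principal minors as in \eqref{eq:alpha-max-minors}, Proposition~\ref{prop:det-per} applied to each tropically sign-nonsingular $A[I,I]$, and then the Newton polygon comparison of Propositions~\ref{prop:newton-polygon-equality} and~\ref{prop:newton-spectral-equality}, as in Corollary~\ref{cor:sign-nonsingular-spectral-equality} applied at every $k$.

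There is one cosmetic slip. With the points $(n-k,\val(\boldsymbol\alpha_k))$ the root valuations are the negatives of the slopes, whereas your computation $a_k-a_{k-1}=d_k$ implicitly uses the points $(k,a_k)$. This does not affect the conclusion.
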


For a general lift, cancellation may occur in the coefficients of the
classical characteristic polynomial.  Therefore, for a general lift we only
have \(\val(\boldsymbol\alpha_k)\leq a_k\).

\section{Majorization and a logarithmic formulation}
Majorization is the natural tool for comparing the two spectra; see
Marshall, Olkin, and Arnold~\cite{MarshallOlkinArnold}. We recall the weak
majorization inequality for eigenvalue valuations, and then state its
multiplicative form, weak log-majorization.

Stochastic matrices and vector majorization formulated directly in max algebra
were studied in~\cite{ManjeganiParsa}. That notion is different from the
additive weak majorization used below: here we compare the ordinary partial
sums of the decreasingly ordered valuations.

For \(x=(x_1,\ldots,x_n)\in\Rmax^n\), write
\(x_{[1]}\geq\cdots\geq x_{[n]}\) for its decreasing rearrangement.

\begin{definition}
For \(x,y\in\Rmax^n\), we say that \(x\) is weakly majorized by \(y\),
and write \(x\prec_w y\), if
\[
 \sum_{i=1}^k x_{[i]}\leq\sum_{i=1}^k y_{[i]},
 \qquad k=1,\ldots,n.
\]
If equality also holds for the total sums, we write \(x\prec y\).
Here sums are taken in the extended real line, with
\(-\infty+a=-\infty\) for every finite \(a\).
\end{definition}

For nonnegative vectors \(x,y\in[0,\infty)^n\), weak log-majorization is
defined by
\[
 x\prec_{w,\log}y
 \quad\Longleftrightarrow\quad
 \prod_{i=1}^k x_{[i]}\leq\prod_{i=1}^k y_{[i]}
 \quad(k=1,\ldots,n).
\]
If equality holds for \(k=n\), we write \(x\prec_{\log}y\); see
\cite{MarshallOlkinArnold,AkianGaubertSharify} for background and related
spectral log-majorization results.
Allowing zero coordinates is essential here: a zero eigenvalue has valuation
$-\infty$ and non-Archimedean modulus zero.
Throughout this formulation, we use the conventions
$e^{-\infty}=0$ and that a product containing a zero factor is zero.

The following result is the max-plus form of
\cite[Theorem~4.4]{AkianBapatGaubert}; it follows from the min-plus
formulation there by the sign change \(\val=-\nu\). See also
Gaubert and Niv~\cite[Remark~6.6]{GaubertNiv}.

\begin{theorem}\label{thm:valuation-majorization}
Let \(\mathbf A\in\mathbb{K}^{n\times n}\), set
\(A=\val(\mathbf A)\), and let
\(\rho_1,\ldots,\rho_n\in\overline{\mathbb{K}}\) be the classical
eigenvalues of \(\mathbf A\), counted with algebraic multiplicity.
Let \(\eta_1,\ldots,\eta_n\) be the algebraic tropical eigenvalues
of \(A\), counted with multiplicity. Then
\[
 \bigl(\val(\rho_1),\ldots,\val(\rho_n)\bigr)
 \prec_w
 \bigl(\eta_1,\ldots,\eta_n\bigr).
\]
If, in addition, the hypotheses of
Theorem~\ref{thm:eigenvalue-valuation} hold, then the eigenvalues
can be indexed so that
\[
 \val(\rho_1)\geq\cdots\geq\val(\rho_n),
 \qquad
 \eta_1\geq\cdots\geq\eta_n,
\]
and
\[
 \val(\rho_i)=\eta_i,
 \qquad i=1,\ldots,n.
\]
\end{theorem}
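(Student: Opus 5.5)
The weak majorization follows from a Newton polygon comparison between the classical characteristic polynomial $p_{\mathbf A}$ over $\overline{\mathbb K}$ and the tropical characteristic polynomial $f_A$. The equality statement follows by combining the coefficient identity of Theorem~\ref{thm:eigenvalue-valuation} with the same Newton polygon argument.

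First I bound the coefficients. Write $p_{\mathbf A}(\lambda)=\lambda^n+\sum_{k}(-1)^k\boldsymbol\alpha_k\lambda^{n-k}$, where $\boldsymbol\alpha_k$ is the sum of the $k\times k$ principal minors of $\mathbf A$. Expanding each minor over permutations, every term is a product of $k$ entries whose valuation is the corresponding tropical weight. By the ultrametric inequality (i) and multiplicativity (ii),
\[
\val(\boldsymbol\alpha_k)\le a_k,\qquad k=0,\dots,n,
\]
with $a_0=\val(\boldsymbol\alpha_0)=0$.

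Next I use the Newton polygon. Over the algebraically closed valued field $\overline{\mathbb K}$, the valuations of the roots of a polynomial are given by the slopes of the upper concave hull of the points $(n-k,\val(\boldsymbol\alpha_k))$. Equivalently, they are the tropical roots of the tropical polynomial $\bigoplus_k \val(\boldsymbol\alpha_k)\odot x^{\odot(n-k)}$. I would invoke this as the standard fundamental theorem for Newton polygons over $\overline{\mathbb K}$; it can be proved via Vieta's formulas. With the roots ordered decreasingly, it gives
\[
\sum_{i=1}^k\val(\rho_{[i]})=\widehat{v}_k,
\]
where $\widehat v$ is the concave hull of the sequence $k\mapsto\val(\boldsymbol\alpha_k)$. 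Likewise, $\sum_{i\le k}\eta_{[i]}=\widehat a_k$, the concave hull of $k\mapsto a_k$; this is the tropical analogue, immediate from the definition of tropical roots and their multiplicities. Since $\val(\boldsymbol\alpha_k)\le a_k$ pointwise, taking concave hulls preserves the inequality, so $\widehat v_k\le\widehat a_k$ for every $k$. This is exactly $\prec_w$. The case $-\infty$ needs the stated convention: if $\boldsymbol\alpha_k=0$ for the top indices, then $\lambda=0$ is a root with the corresponding multiplicity, and the hulls are taken over the finite support.

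For the equality part, under the hypotheses of Theorem~\ref{thm:eigenvalue-valuation} that theorem gives $\val(\boldsymbol\alpha_k)=a_k$ for all $k$. The two Newton polygons then coincide, so the decreasingly ordered root valuations coincide with the ordered slopes. Hence $\val(\rho_i)=\eta_i$, consistent with Theorem~\ref{thm:diagonal-spectrum} identifying the $\eta_i$ with the diagonal entries.

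I expect the main obstacle to be stating the Newton polygon theorem cleanly in the max-plus convention with $-\infty$ entries. I would handle it by factoring out $\lambda^m$, with $m$ the multiplicity of the zero root, and then applying the finite-coefficient version. I would also check that the hull vertices of $\widehat a$ match the multiplicity definition given for tropical roots.
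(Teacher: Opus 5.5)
Your argument is correct. The paper gives no proof of its own. It imports the weak majorization from \cite[Theorem~4.4]{AkianBapatGaubert} via the sign change $\val=-\nu$, and the equality clause is simply a restatement of Theorem~\ref{thm:eigenvalue-valuation}.

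You instead give a self-contained proof specialized to the scalar characteristic polynomial. It has three ingredients:
\begin{enumerate}[label=\textup{(\roman*)}]
 \item the coefficientwise bound $\val(\boldsymbol\alpha_k)\le a_k$, with the common normalization $\val(\boldsymbol\alpha_0)=a_0=0$;
 \item the identification of the partial sums of decreasingly ordered root valuations with least concave majorants of the coefficient sequences (classically via Vieta, tropically via the multiplicity definition);
 \item monotonicity of the least concave majorant.
\end{enumerate}
This is the mechanism behind the cited theorem. Your handling of $-\infty$ is sound, since the support of both coefficient sequences always contains $k=0$.

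What your route buys is transparency. It shows that the only input is the coefficient inequality, anchored at a common endpoint. That is exactly what the paper later exploits in Propositions~\ref{prop:coefficient-realization}--\ref{prop:newton-spectral-equality} to locate where equality can fail. The citation, in exchange, is shorter and covers general matrix polynomials.

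For the equality part, your detour through $\val(\boldsymbol\alpha_k)=a_k$ and coinciding Newton polygons is valid but unnecessary. Theorem~\ref{thm:eigenvalue-valuation} already asserts $\val(\rho_i)=\eta_i$ directly.
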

For \(x\in\K\), define the non-Archimedean modulus
\[
 |x|_{\val}=\begin{cases}
 e^{\val(x)},&x\ne0,\\
 0,&x=0.
 \end{cases}
\]
It satisfies \(|xy|_{\val}=|x|_{\val}|y|_{\val}\) and
\(|x+y|_{\val}\leq\max\{|x|_{\val},|y|_{\val}\}\).
We use the same definition for elements of \(\overline{\mathbb{K}}\).

Applying the exponential map to the preceding theorem gives its multiplicative
form.
\begin{corollary}\label{cor:log-majorization}
Under the hypotheses of Theorem~\ref{thm:valuation-majorization},
\[
 \bigl(|\rho_1|_{\val},\ldots,|\rho_n|_{\val}\bigr)
 \prec_{w,\log}
 \bigl(e^{\eta_1},\ldots,e^{\eta_n}\bigr).
\]
If \(\mathbf A\) is totally positive and \(A\in\TPT\), then
\[
 |\rho_i|_{\val}=e^{\eta_i},
 \qquad i=1,\ldots,n,
\]
after a common decreasing ordering.  In particular, equality holds in every
partial-product inequality.
\end{corollary}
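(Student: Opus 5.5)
The plan is to apply the exponential map coordinatewise to the additive statements of Theorem~\ref{thm:valuation-majorization}; monotonicity of \(e^{x}\) does all the work.

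\emph{Step 1: ordering.} Set \(x_i=\val(\rho_i)\) and \(y_i=\eta_i\), both in \(\Rmax\). Since \(x\mapsto e^{x}\) (with \(e^{-\infty}=0\)) is strictly increasing on \(\Rmax\), it preserves decreasing rearrangements. Thus
\[
 \bigl(|\rho|_{\val}\bigr)_{[i]}=e^{x_{[i]}},
 \qquad
 \bigl(e^{\eta}\bigr)_{[i]}=e^{y_{[i]}},
\]
where \(|\rho_i|_{\val}=e^{\val(\rho_i)}\) holds by the definition of the modulus, including the case \(\rho_i=0\).

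\emph{Step 2: converting sums to products.} The identity
\[
 \prod_{i=1}^k e^{x_{[i]}}=\exp\Bigl(\sum_{i=1}^k x_{[i]}\Bigr)
\]
holds in the extended sense. If some \(x_{[i]}=-\infty\), the sum is \(-\infty\) by the paper's convention, and the product contains a zero factor, so both sides are \(0\). The same holds for \(y\).

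\emph{Step 3: the weak log-majorization.} The inequality \(\sum_{i\le k}x_{[i]}\le\sum_{i\le k}y_{[i]}\) from Theorem~\ref{thm:valuation-majorization} is preserved by the monotone map \(\exp\) on \([-\infty,\infty)\). This gives each partial-product inequality, and hence \(\prec_{w,\log}\).

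\emph{Step 4: the totally positive case.} Suppose \(\mathbf A\in\TP\) and \(A\in\TPT\). Then Theorem~\ref{thm:eigenvalue-valuation} provides a common decreasing indexing with \(\val(\rho_i)=\eta_i\). Exponentiating gives \(|\rho_i|_{\val}=e^{\eta_i}\) for each \(i\), so every partial product agrees.

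The only delicate point is the treatment of \(-\infty\) and zero coordinates, which is already handled in Step~2. In the TP case this issue does not arise, because the eigenvalues are finite and the entries of \(A\) are finite.
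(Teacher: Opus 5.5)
Your proposal is correct and follows the paper's proof. You exponentiate the partial-sum inequalities of Theorem~\ref{thm:valuation-majorization} to obtain the partial-product inequalities, and in the totally positive case you exponentiate the equalities of Theorem~\ref{thm:eigenvalue-valuation}; your explicit treatment of the $-\infty$/zero conventions and of $\exp$ preserving decreasing rearrangements spells out what the paper leaves implicit.
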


\begin{proof}
For \(k=1,\ldots,n\), Theorem~\ref{thm:valuation-majorization} gives
\[
 \sum_{i=1}^k\val(\rho_{[i]})
 \leq
 \sum_{i=1}^k\eta_i.
\]
Exponentiating yields
\[
 \prod_{i=1}^k|\rho_{[i]}|_{\val}
 =\exp\!\left(\sum_{i=1}^k\val(\rho_{[i]})\right)
 \leq
 \exp\!\left(\sum_{i=1}^k\eta_i\right)
 =\prod_{i=1}^k e^{\eta_i}.
\]
This is exactly weak log-majorization.  The equality statement follows from
Theorem~\ref{thm:eigenvalue-valuation}.
\end{proof}

\begin{remark}
Corollary~\ref{cor:log-majorization} is the multiplicative form of
Theorem~\ref{thm:valuation-majorization}.  It follows by applying the
order-preserving exponential map.  This also explains the link between
additive tropical majorization and classical log-majorization.
\end{remark}

\section{A spectral equality characterization}
\label{sec:spectral-characterization}
This section contains our main characterization of spectral equality.  By
analyzing dominant cancellation in principal minors through their leading
coefficients, we obtain a Newton-polygon criterion.  We then describe the
exceptional locus algebraically and give an explicit criterion in dimension
three.  Throughout, \(\mathbb{K}\) is the real closed field of generalized
Puiseux series introduced earlier.

We use the max-type valuation convention
\[
 \val(t^\lambda)=\lambda,
 \qquad
 \val(fg)=\val(f)+\val(g),
 \qquad
 \val(f+g)\leq\max\{\val(f),\val(g)\}.
\]
Every nonzero $f\in\K$ has a unique leading-term expansion
\[
 f=c t^\lambda+\text{terms of valuation strictly smaller than $\lambda$},
 \qquad
 \lambda=\val(f),
\]
where $c\in\mathbb R\setminus\{0\}$.  We denote this leading coefficient by
$\lc(f)=c$.  If $f>0$, then $\lc(f)>0$.  Bold letters denote matrices over
$\K$, and ordinary letters denote their entrywise tropicalizations.

We use the Newton polygon theorem in its valued-field form; see, for example,
\cite[Section~2]{AkianBapatGaubert}.  With the max-type valuation used here, the relevant
Newton polygon is the \emph{upper} Newton polygon.  Thus, if
\[
 p(\lambda)=\sum_{j=0}^{n}c_j\lambda^j,
 \qquad c_0c_n\neq0,
\]
then $\NP(p)$ is the upper boundary of the convex hull of
\[
 \{(j,\val(c_j)):c_j\neq0\}.
\]
A segment of slope $m$ and horizontal length $\ell$ corresponds to exactly
$\ell$ roots, counted with multiplicity, of valuation $-m$.

Let
\[
 \mathbf A\in\TN(\K),
 \qquad
 A=\val(\mathbf A)\in(\TNT)^{n\times n},
\]
and assume that all diagonal entries of $A$ are finite.  Write them in
decreasing order as
\[
 \eta_1\geq\eta_2\geq\cdots\geq\eta_n.
\]
The tropical characteristic polynomial is
\[
 f_A(x)=x^{\odot n}\oplus
 \bigoplus_{k=1}^{n}a_k\odot x^{\odot(n-k)},
\]
where tropical total nonnegativity gives
\begin{equation}\label{eq:ak-diagonal}
 a_0=0,
 \qquad
 a_k=\eta_1+\cdots+\eta_k,
 \quad k=1,\ldots,n.
\end{equation}
In other words,
\begin{equation}\label{eq:ak-principal}
 a_k=max_{\substack{I\subseteq[n]\\|I|=k}}
       \per(A[I,I]).
\end{equation}
We write the classical characteristic polynomial as
\begin{equation}\label{eq:classical-characteristic}
 p_{\mathbf A}(\lambda)=\det(\lambda I-\mathbf A)
 =\lambda^n+
 \sum_{k=1}^{n}(-1)^k\boldsymbol\alpha_k\lambda^{n-k},
\end{equation}
where
\begin{equation}\label{eq:alpha-principal-sum}
 \boldsymbol\alpha_k
 =\sum_{\substack{I\subseteq[n]\\|I|=k}}
   \det\mathbf A[I,I].
\end{equation}
We also put $\boldsymbol\alpha_0=1$.

\subsection{Dominant cancellation in a determinant}

Let
\[
 \mathbf B=(\mathbf b_{ij})\in\K^{m\times m},
 \qquad B=\val(\mathbf B),
\]
and assume that $\per(B)>-\infty$.  For every nonzero entry write
\[
 \mathbf b_{ij}
 =c_{ij}t^{B_{ij}}
 +\text{terms of valuation smaller than $B_{ij}$},
 \qquad c_{ij}=\lc(\mathbf b_{ij}).
\]
Define the set of maximizing permutations by
\[
 \mathcal M(B)
 =\left\{
   \sigma\in S_m:
   \sum_{i=1}^{m}B_{i,\sigma(i)}=\per(B)
  \right\}.
\]

\begin{definition}\label{def:dominant-coefficient}
The \emph{dominant determinant coefficient} of the lift $\mathbf B$ over $B$
is
\begin{equation}\label{eq:dominant-coefficient}
 \Delta_B(\mathbf B)
 =\sum_{\sigma\in\mathcal M(B)}
   \operatorname{sgn}(\sigma)
   \prod_{i=1}^{m}c_{i,\sigma(i)}.
\end{equation}
Thus $\Delta_B(\mathbf B)$ is the coefficient of $t^{\per(B)}$ in the
determinant expansion of $\mathbf B$.
\end{definition}

The notation records an important distinction: $\Delta_B(\mathbf B)$ depends
not only on the tropical matrix $B$, but also on the leading coefficients of
the chosen lift $\mathbf B$.

The next lemma gives the precise criterion for dominant cancellation in a
determinant.
\begin{lemma}
\label{lem:dominant-cancellation}
Let $\mathbf B\in\K^{m\times m}$, put $B=\val(\mathbf B)$, and assume that
$\per(B)>-\infty$.  Then
\begin{equation}\label{eq:det-per-upper}
 \val(\det\mathbf B)\leq\per(B).
\end{equation}
The following statements are equivalent:
\begin{enumerate}[label=\textup{(\roman*)}]
 \item $\val(\det\mathbf B)=\per(B)$;
 \item the terms of maximal valuation in the determinant expansion do not
 cancel completely;
 \item $\Delta_B(\mathbf B)\neq0$.
\end{enumerate}
If $\Delta_B(\mathbf B)=0$, then either $\det\mathbf B=0$, or
\[
 \val(\det\mathbf B)<\per(B).
\]
\end{lemma}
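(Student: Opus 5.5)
The plan is to expand $\det\mathbf B=\sum_{\sigma\in S_m}\operatorname{sgn}(\sigma)\prod_i\mathbf b_{i,\sigma(i)}$ and sort the terms by valuation. A permutation whose product contains a zero entry contributes nothing. For every other $\sigma$, multiplicativity of $\val$ gives $\val\bigl(\prod_i\mathbf b_{i,\sigma(i)}\bigr)=\sum_iB_{i,\sigma(i)}\le\per(B)$. Since $\val$ is ultrametric on finite sums, \eqref{eq:det-per-upper} follows at once.

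For the equivalences, I would split the sum as $\det\mathbf B=\mathbf S_{\max}+\mathbf R$. Here $\mathbf S_{\max}$ is the sum over $\sigma\in\mathcal M(B)$, and $\mathbf R$ is the sum over the remaining permutations with nonzero product. Each term of $\mathbf R$ has valuation strictly below $\per(B)$. Because $S_m$ is finite, $\val(\mathbf R)<\per(B)$ (or $\mathbf R=0$). For $\sigma\in\mathcal M(B)$, each product equals $\prod_ic_{i,\sigma(i)}\,t^{\per(B)}$ plus terms of smaller valuation. This uses the fact that products of leading-term expansions multiply leading coefficients, and that the lower-order parts of finitely many generalized Puiseux series stay strictly below the leading exponent. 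Hence $\mathbf S_{\max}=\Delta_B(\mathbf B)\,t^{\per(B)}+(\text{lower})$. It follows that the coefficient of $t^{\per(B)}$ in $\det\mathbf B$ is exactly $\Delta_B(\mathbf B)$, which justifies the phrase in Definition~\ref{def:dominant-coefficient}.

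Now (ii)$\Leftrightarrow$(iii) is essentially the definition, since "the terms of maximal valuation do not cancel completely" means their $t^{\per(B)}$ coefficients have nonzero sum. For (iii)$\Rightarrow$(i): if $\Delta_B(\mathbf B)\neq0$, then $\det\mathbf B$ has a nonzero coefficient at $t^{\per(B)}$ and nothing above it, so $\val(\det\mathbf B)=\per(B)$. For (i)$\Rightarrow$(iii): if $\val(\det\mathbf B)=\per(B)$, then the coefficient at that exponent is nonzero, and that coefficient is $\Delta_B(\mathbf B)$. The final claim follows the same way. If $\Delta_B(\mathbf B)=0$, every exponent appearing in $\det\mathbf B$ is below $\per(B)$. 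Either no exponent appears, so $\det\mathbf B=0$, or the supremum of the exponents that appear is less than $\per(B)$. This is strict because a generalized Puiseux series has a maximal exponent, its support being finite or decreasing to $-\infty$.

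The only delicate point is making sure the lower-order tails of the finitely many maximizing products cannot contribute at exponent $\per(B)$. Each tail has support strictly below $\per(B)$, and the field's support condition ensures each support has a maximum. Therefore the sum of the tails has support contained in a set whose maximum is below $\per(B)$. I expect no real obstacle beyond stating this carefully.
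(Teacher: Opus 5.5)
Your proposal is correct and follows the same route as the paper. Both arguments expand the determinant and separate the maximizing permutations from the rest, which gives $\det\mathbf B=\Delta_B(\mathbf B)\,t^{\per(B)}+(\text{terms of valuation}<\per(B))$; the upper bound, the equivalences, and the final dichotomy are then read off from this expansion. Your remark that each support of a generalized Puiseux series attains its maximum, which is what makes the last inequality strict, is a detail the paper leaves implicit.
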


\begin{proof}
Expanding the determinant gives
\begin{equation}\label{eq:det-expansion}
 \det\mathbf B
 =\sum_{\sigma\in S_m}
  \operatorname{sgn}(\sigma)
  \prod_{i=1}^{m}\mathbf b_{i,\sigma(i)}.
\end{equation}
For every nonzero permutation term, multiplicativity of the valuation yields
\[
 \val\!\left(\prod_{i=1}^{m}\mathbf b_{i,\sigma(i)}\right)
 =\sum_{i=1}^{m}B_{i,\sigma(i)}
 \leq\per(B).
\]
The non-Archimedean triangle inequality applied to
\eqref{eq:det-expansion} proves \eqref{eq:det-per-upper}.

If $\sigma\in\mathcal M(B)$, then
\[
 \prod_{i=1}^{m}\mathbf b_{i,\sigma(i)}
 =\left(\prod_{i=1}^{m}c_{i,\sigma(i)}\right)t^{\per(B)}
 +\text{terms of smaller valuation}.
\]
All permutations outside $\mathcal M(B)$ contribute only terms of valuation
strictly smaller than $\per(B)$.  Therefore,
\begin{equation}\label{eq:det-leading-expansion}
 \det\mathbf B
 =\Delta_B(\mathbf B)t^{\per(B)}
 +\text{terms of valuation smaller than $\per(B)$}.
\end{equation}
Thus the determinant has valuation $\per(B)$ exactly when
$\Delta_B(\mathbf B)\neq0$.  If this coefficient vanishes, then the determinant
is either zero or has strictly smaller valuation.
\end{proof}

The following example illustrates the dominant-coefficient calculation when
several permutations attain the tropical permanent.
\begin{example}
\label{ex:general-dominant-coefficient}
Let
\[
B=
\begin{pmatrix}
1&2&3\\
2&1&4\\
3&4&1
\end{pmatrix}.
\]
Consider the lift
\[
\mathbf{B}=
\begin{pmatrix}
2t+5 & 3t^2+t & t^3+4t^2\\
4t^2-t & 5t+2 & 6t^4+t^2\\
7t^3+t^2 & 8t^4+t^3 & 9t+1
\end{pmatrix}.
\]
Then
$
\val(\mathbf{B})=B.
$
The leading coefficients \(c_{ij}=\lc(\mathbf{b}_{ij})\) are
\[
(c_{ij})=
\begin{pmatrix}
2&3&1\\
4&5&6\\
7&8&9
\end{pmatrix}.
\]
The weights of all permutations are
\[
\begin{array}{c|cccccc}
\sigma & \mathrm{id} & (12) & (13) & (23) & (123) & (132)\\
\hline
\sum_i B_{i,\sigma(i)} & 3 & 5 & 7 & 9 & 9 & 9
\end{array}.
\]
The tropical permanent is
\[
\per(B)=\max_{\sigma\in S_3}\sum_{i=1}^3 B_{i,\sigma(i)}=9.
\]
Thus
$
\mathcal{M}(B)=\{(23),(123),(132)\}.
$
The dominant determinant coefficient is
\[
\Delta_B(\mathbf{B})
=
\sum_{\sigma\in\mathcal{M}(B)}
\operatorname{sgn}(\sigma)\prod_{i=1}^3 c_{i,\sigma(i)}=
-c_{11}c_{23}c_{32}
+c_{12}c_{23}c_{31}
+c_{13}c_{21}c_{32}.
\]
Substituting the leading coefficients,
\[
\Delta_B(\mathbf{B})
=
-2\cdot 6\cdot 8
+3\cdot 6\cdot 7
+1\cdot 4\cdot 8
=
62.
\]
Thus the coefficient of \(t^{\per(B)}=t^9\) in \(\det(\mathbf{B})\) is \(62\). Therefore,
\[
\det(\mathbf{B})=62t^9+\text{terms of valuation }<9,
\]
and
\[
\val(\det(\mathbf{B}))=9=\per(B).
\]
This example only illustrates the dominant-coefficient calculation; no total
nonnegativity assumption is made on $\mathbf B$.
\end{example}

For $k=1,\ldots,n$, define
\begin{equation}\label{eq:maximizing-principal-sets}
 \mathcal I_k(A)
 =\left\{
   I\subseteq[n]:|I|=k,\ \per(A[I,I])=a_k
  \right\}.
\end{equation}
Thus $\mathcal I_k(A)$ consists of the principal index sets that realize the
$k$th tropical characteristic coefficient.  It is nonempty: if $I_k$ selects
$k$ diagonal entries of largest value, then the identity permutation in
$A[I_k,I_k]$ has weight $a_k$, and \eqref{eq:ak-principal} gives
$\per(A[I_k,I_k])=a_k$.

The following proposition determines exactly when a classical characteristic
coefficient attains its tropical upper bound.
\begin{proposition}
\label{prop:coefficient-realization}
Let $\mathbf A$ be a totally nonnegative matrix. For every $k=1,\ldots,n$,
\begin{equation}\label{eq:coefficient-upper-bound}
 \val(\boldsymbol\alpha_k)\leq a_k.
\end{equation}
The following statements are equivalent:
\begin{enumerate}[label=\textup{(\roman*)}]
 \item $\val(\boldsymbol\alpha_k)=a_k$;
 \item there exists $I\in\mathcal I_k(A)$ such that
 \[
  \val\!\left(\det\mathbf A[I,I]\right)=a_k;
 \]
 \item there exists $I\in\mathcal I_k(A)$ such that
 \[
  \val\!\left(\det\mathbf A[I,I]\right)=\per(A[I,I]);
 \]
 \item there exists $I\in\mathcal I_k(A)$ such that
 \[
  \Delta_{A[I,I]}\!\left(\mathbf A[I,I]\right)\neq0.
 \]
\end{enumerate}
\end{proposition}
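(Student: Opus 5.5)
The plan is to exploit the one feature that distinguishes the totally nonnegative setting from the general one: every principal minor of $\mathbf A$ is nonnegative in the ordered field $\K$. Hence \eqref{eq:alpha-principal-sum} expresses $\boldsymbol\alpha_k$ as a sum of elements of $\K_{\geq0}$. By the semifield-homomorphism property of $\val$ on $\K_{\geq0}$ (the first proposition of Section~2), there is no cancellation among the summands, so
\[
 \val(\boldsymbol\alpha_k)=\max_{|I|=k}\val\bigl(\det\mathbf A[I,I]\bigr).
\]
This identity will drive the entire proof.

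For the upper bound \eqref{eq:coefficient-upper-bound}, I will combine Lemma~\ref{lem:dominant-cancellation} for each $I$, which gives $\val(\det\mathbf A[I,I])\leq\per(A[I,I])$, with \eqref{eq:ak-principal}, which gives $\per(A[I,I])\leq a_k$. If $\per(A[I,I])=-\infty$, then every permutation term of $\det\mathbf A[I,I]$ vanishes, so the minor is $0$ and the inequality is trivial. The bound then follows from the displayed identity. Since all diagonal entries are finite, $a_k$ is finite, so the case $\boldsymbol\alpha_k=0$ simply gives strict inequality and causes no trouble.

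For the equivalences, I will prove (i)$\Leftrightarrow$(ii), then (ii)$\Leftrightarrow$(iii), then (iii)$\Leftrightarrow$(iv).
\begin{enumerate}[label=\textup{(\alph*)}]
 \item For (i)$\Rightarrow$(ii): by the identity, (i) gives some $I$ with $|I|=k$ and $\val(\det\mathbf A[I,I])=a_k$. The chain $a_k=\val(\det\mathbf A[I,I])\leq\per(A[I,I])\leq a_k$ then forces $\per(A[I,I])=a_k$, so $I\in\mathcal I_k(A)$.
 \item For (ii)$\Rightarrow$(i): the identity together with \eqref{eq:coefficient-upper-bound} gives $a_k\leq\val(\boldsymbol\alpha_k)\leq a_k$.
 \item For (ii)$\Leftrightarrow$(iii): this is immediate, since $\per(A[I,I])=a_k$ for every $I\in\mathcal I_k(A)$.
 \item For (iii)$\Leftrightarrow$(iv): I will apply Lemma~\ref{lem:dominant-cancellation} to $\mathbf B=\mathbf A[I,I]$. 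Its hypothesis $\per(B)>-\infty$ holds because $\per(A[I,I])=a_k$ is finite.
\end{enumerate}

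The only genuinely delicate step is the no-cancellation identity for $\val(\boldsymbol\alpha_k)$. I need to verify two things. First, $\det\mathbf A[I,I]\geq0$ really holds in $\K$; this is the definition of $\TN(\K)$ with respect to the field order, and in $\K$ positivity is equivalent to having a positive leading coefficient. Second, zero summands must be handled correctly; they have valuation $-\infty$ and do not affect the maximum. Everything else is bookkeeping with the chain of inequalities above.
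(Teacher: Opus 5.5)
Your proposal is correct and follows the paper's proof essentially step for step. Both arguments use the no-cancellation identity $\val(\boldsymbol\alpha_k)=\max_{|I|=k}\val(\det\mathbf A[I,I])$, which comes from nonnegativity of the principal minors, together with the chain $\val(\det\mathbf A[I,I])\leq\per(A[I,I])\leq a_k$, and both invoke Lemma~\ref{lem:dominant-cancellation} for (iii)$\Leftrightarrow$(iv). Your extra checks are harmless refinements: the finiteness hypothesis of that lemma, and the $\per=-\infty$ case, which in fact cannot occur because the diagonal entries are finite.
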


\begin{proof}
Because $\mathbf A$ is totally nonnegative, every principal minor in
\eqref{eq:alpha-principal-sum} is nonnegative.  Thus there is no cancellation
between different principal minors, and
\begin{equation}\label{eq:alpha-max-minors}
 \val(\boldsymbol\alpha_k)
 =\max_{\substack{I\subseteq[n]\\|I|=k}}
   \val\!\left(\det\mathbf A[I,I]\right),
\end{equation}
where zero minors are assigned valuation $-\infty$.  For every $I$ of
cardinality $k$, Lemma~\ref{lem:dominant-cancellation} and
\eqref{eq:ak-principal} give
\begin{equation}\label{eq:minor-chain}
 \val\!\left(\det\mathbf A[I,I]\right)
 \leq\per(A[I,I])\leq a_k.
\end{equation}
Together with \eqref{eq:alpha-max-minors}, this proves
\eqref{eq:coefficient-upper-bound}.

Suppose $\val(\boldsymbol\alpha_k)=a_k$.  By
\eqref{eq:alpha-max-minors}, some $I$ of cardinality $k$ satisfies
\[
 \val\!\left(\det\mathbf A[I,I]\right)=a_k.
\]
The two inequalities in \eqref{eq:minor-chain} must then both be equalities, so
$I\in\mathcal I_k(A)$.  Conversely, the existence of such an $I$ and
\eqref{eq:alpha-max-minors} imply
$\val(\boldsymbol\alpha_k)\geq a_k$; the reverse inequality is
\eqref{eq:coefficient-upper-bound}.  This shows that (i) and
(ii) are equivalent.

For $I\in\mathcal I_k(A)$ we have $\per(A[I,I])=a_k$, so (ii) and (iii) are
equivalent.  Finally, Lemma~\ref{lem:dominant-cancellation} gives the equivalence
of (iii) and (iv).
\end{proof}

\subsection{Essential vertices of the tropical Newton polygon}

Define the breakpoint set
\begin{equation}\label{eq:breakpoint-set}
 \mathcal B(A)
 =\{k\in\{1,\ldots,n-1\}:\eta_k>\eta_{k+1}\}\cup\{n\}.
\end{equation}

The next lemma identifies the coefficient points that are essential for the
upper tropical Newton polygon.
\begin{lemma}
\label{lem:essential-vertices}
Put
\[
 P_k=(n-k,a_k),
 \qquad k=0,\ldots,n.
\]
The polygonal line through $P_n,P_{n-1},\ldots,P_0$ is concave and is the upper
Newton polygon of $f_A$.  For $1\leq k\leq n-1$, the point $P_k$ is a strict
vertex if and only if $\eta_k>\eta_{k+1}$.  Thus, apart from the automatically
fixed endpoint $P_0=(n,0)$, the essential vertices are
\[
 P_k,\qquad k\in\mathcal B(A).
\]
\end{lemma}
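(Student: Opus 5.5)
The plan is to use the explicit formula \eqref{eq:ak-diagonal} for the coefficients, $a_k=\eta_1+\cdots+\eta_k$, and compute slopes directly. The segment joining $P_k=(n-k,a_k)$ to $P_{k-1}=(n-k+1,a_{k-1})$ has horizontal length one and slope
\[
 \frac{a_{k-1}-a_k}{(n-k+1)-(n-k)}=-\eta_k,\qquad k=1,\ldots,n.
\]
Reading the polygonal line from left to right, through $P_n,P_{n-1},\ldots,P_0$, the successive slopes are therefore $-\eta_n,-\eta_{n-1},\ldots,-\eta_1$. Since $\eta_1\geq\cdots\geq\eta_n$, this sequence is weakly decreasing, so the piecewise linear function is concave.

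The second step identifies this line with the upper Newton polygon of $f_A$, namely the upper boundary of the convex hull of the points $(n-k,a_k)$. These are exactly the $P_k$, and $P_0=(n,0)$ comes from the monic term.

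\begin{itemize}
 \item A concave piecewise linear function that interpolates all the points has its hypograph convex and containing every $P_k$. Hence the upper hull lies below the function.
 \item The function is itself a piecewise linear interpolation of points of the set, so it lies below the upper hull.
\end{itemize}

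Together these give equality. One should also note that all $a_k$ are finite, because the diagonal entries are assumed finite.

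The third step characterizes strict vertices. For $1\le k\le n-1$, the point $P_k$ has the segment of slope $-\eta_{k+1}$ on its left and the segment of slope $-\eta_k$ on its right.

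\begin{itemize}
 \item If $\eta_k>\eta_{k+1}$, the slope strictly drops at $P_k$, so $P_k$ is a corner, that is, a strict vertex.
 \item If $\eta_k=\eta_{k+1}$, then $P_k$ lies in the relative interior of the straight segment from $P_{k+1}$ to $P_{k-1}$, so it is not a vertex.
\end{itemize}

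The endpoints need separate treatment. $P_n$ is always an extreme point, being the leftmost point of the hull; this accounts for the inclusion $n\in\mathcal B(A)$. $P_0$ is the other endpoint and is fixed at $(n,0)$ independently of $A$. Combining these facts gives the stated list of essential vertices $P_k$ with $k\in\mathcal B(A)$.

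There is no real obstacle. The only point requiring care is the convex-hull identification in the second step, together with the convention that a strict vertex means a point where the slope changes strictly. Collinear intermediate points must be counted as non-vertices, and this is precisely the equality case $\eta_k=\eta_{k+1}$.
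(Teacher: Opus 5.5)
Your proposal is correct and follows the paper's proof: you compute the unit-length segment slopes $-\eta_k$ from $a_k=\eta_1+\cdots+\eta_k$, deduce concavity from their monotonicity, and identify strict vertices as the places where consecutive slopes differ. Your explicit hypograph argument showing that the concave interpolant is the upper hull is a welcome addition, since the paper leaves that identification implicit.
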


\begin{proof}
The slope of the segment from $P_k$ to $P_{k-1}$ is
\begin{equation}\label{eq:tropical-segment-slope}
 \frac{a_{k-1}-a_k}{(n-k+1)-(n-k)}=-\eta_k.
\end{equation}
From left to right the successive slopes are
\[
 -\eta_n,-\eta_{n-1},\ldots,-\eta_1,
\]
which are nonincreasing.  Thus the polygonal line is concave.  At $P_k$, the
left and right slopes are $-\eta_{k+1}$ and $-\eta_k$, respectively.  They are
different exactly when $\eta_k>\eta_{k+1}$.  The endpoint assertion is
immediate.  This is the max-type form of the strict-vertex criterion for
formal tropical polynomials; compare
Akian--Bapat--Gaubert~\cite[Lemma~2.6]{AkianBapatGaubert}.  Their statement
uses the min-plus convention, so reversing the valuation order yields the
upper-polygon formulation used here.
\end{proof}

The following example shows why only strict vertices of the tropical Newton
polygon need to be tested.
\begin{example}\label{ex:non-strict-vertex}
Consider the tropical matrix
\[
A=\begin{pmatrix}
5&4&-\infty\\
3&3&2\\
-\infty&2&3
\end{pmatrix}\in\TNT.
\]
Its diagonal entries are \(5,3,3\), so
\[
 \eta_1=5,\qquad \eta_2=3,\qquad \eta_3=3.
\]
Since \(\eta_1>\eta_2=\eta_3\),
\[
\mathcal{B}(A)=\{1,3\}.
\]
The tropical coefficients are
\[
a_0=0,\qquad a_1=5,\qquad a_2=5+3=8,\qquad a_3=5+3+3=11.
\]
Thus the points of the upper Newton polygon are
\[
P_0=(3,0),\quad P_1=(2,5),\quad P_2=(1,8),\quad P_3=(0,11).
\]
From left to right, the slopes are
\[
 -\eta_3=-3,\qquad -\eta_2=-3,\qquad -\eta_1=-5,
\]
which are nonincreasing; hence the polygonal line is concave.
At \(P_2\), the left and right slopes are both \(-3\), so \(P_2\) is \emph{not} a strict vertex.  
Therefore the essential vertices are
\(P_3,P_1\), and \(P_0\),
corresponding to \(k\in\mathcal{B}(A)=\{1,3\}\) together with the fixed endpoint \(P_0\).  
\end{example}

We now relate the Newton polygons to spectral equality.
Proposition~\ref{prop:newton-polygon-equality} characterizes when the polygons
coincide, and Proposition~\ref{prop:newton-spectral-equality} shows that this
coincidence is equivalent to spectral equality.
\begin{proposition}
\label{prop:newton-polygon-equality}
The equality
\begin{equation}\label{eq:newton-polygon-equality}
 \NP(p_{\mathbf A})=\NP(f_A)
\end{equation}
holds if and only if
\begin{equation}\label{eq:breakpoint-coefficient-equality}
 \val(\boldsymbol\alpha_k)=a_k
 \qquad\text{for every }k\in\mathcal B(A).
\end{equation}
\end{proposition}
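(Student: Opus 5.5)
Both Newton polygons are compared as functions on $[0,n]$, with the coefficient of $\lambda^{n-k}$ in $p_{\mathbf A}$ placed at abscissa $n-k$. Note that $\val((-1)^k\boldsymbol\alpha_k)=\val(\boldsymbol\alpha_k)$. By Lemma~\ref{lem:essential-vertices}, $\NP(f_A)$ is the concave piecewise-linear function $g$ interpolating the points $P_k=(n-k,a_k)$, and it is already the upper hull of these points. Proposition~\ref{prop:coefficient-realization} gives $\val(\boldsymbol\alpha_k)\le a_k$ for every $k$, so each point $Q_k=(n-k,\val(\boldsymbol\alpha_k))$ with $\boldsymbol\alpha_k\neq0$ lies on or below the graph of $g$. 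Since $g$ is concave, the upper hull of the $Q_k$ lies below $g$, giving $\NP(p_{\mathbf A})\le\NP(f_A)$ pointwise on its domain. The argument then splits into the two implications.

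For sufficiency, assume \eqref{eq:breakpoint-coefficient-equality}. The endpoint $Q_0=P_0=(n,0)$ holds trivially. For each $k\in\mathcal B(A)$ we get $Q_k=P_k$; in particular $k=n$ gives $\boldsymbol\alpha_n\ne0$, so the constant coefficient is nonzero and the hypothesis $c_0c_n\neq0$ of the Newton polygon theorem is met. The upper hull of the $Q_j$ is then at least the hull of $\{P_k:k\in\mathcal B(A)\}\cup\{P_0\}$. By Lemma~\ref{lem:essential-vertices} these are exactly the vertices of $g$, so their hull is $g$ itself. Combined with the upper bound above, $\NP(p_{\mathbf A})=g=\NP(f_A)$.

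For necessity, assume the two polygons are equal. Then they share the domain $[0,n]$, which forces $\boldsymbol\alpha_n\ne0$. Each strict vertex $P_k$ with $k\in\mathcal B(A)$ must be a vertex of $\NP(p_{\mathbf A})$. The main point to check is that vertices of an upper hull belong to the generating point set. So some $Q_j$ equals $P_k$, and since abscissas determine indices, $j=k$ and $\val(\boldsymbol\alpha_k)=a_k$. For $k=n$ the endpoint $(0,a_n)$ is likewise the leftmost hull point, so $(0,\val(\boldsymbol\alpha_n))=(0,a_n)$. This is a routine convexity fact and is the only step needing care; I would state it explicitly and justify it by noting that a hull vertex cannot be a nontrivial convex combination of the points.
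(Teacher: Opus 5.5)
Your proof is correct and follows the paper's own argument. The bound $\val(\boldsymbol\alpha_k)\le a_k$ from Proposition~\ref{prop:coefficient-realization} keeps every classical point on or below the concave tropical polygon. Sufficiency follows because the essential vertices $P_k$, $k\in\mathcal B(A)$, together with $P_0$, are classical points. Necessity follows because strict vertices of an upper hull are generating points, and the left endpoint handles $k=n$. You also note explicitly that $k=n$ forces $\boldsymbol\alpha_n\neq0$, so the hypothesis $c_0c_n\neq0$ of the Newton polygon theorem holds; the paper leaves this small point implicit.
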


\begin{proof}
For $k=0$ we have $\val(\boldsymbol\alpha_0)=a_0=0$, and for
$k=1,\ldots,n$, Proposition~\ref{prop:coefficient-realization} gives
\begin{equation}\label{eq:classical-points-below}
 \val(\boldsymbol\alpha_k)\leq a_k.
\end{equation}
Thus every classical coefficient point
\[
 Q_k=(n-k,\val(\boldsymbol\alpha_k))
\]
lies on or below the corresponding tropical point $P_k$; if
$\boldsymbol\alpha_k=0$, the point $Q_k$ is omitted.

Suppose first that \eqref{eq:breakpoint-coefficient-equality} holds.  Then every
essential tropical vertex $P_k$, $k\in\mathcal B(A)$, is also a classical
coefficient point, and $P_0=(n,0)$ is common to the two diagrams because both
polynomials are monic.  All remaining classical points lie on or below the
tropical polygon by \eqref{eq:classical-points-below}; hence they cannot change
its upper boundary.  Therefore \eqref{eq:newton-polygon-equality} holds.

Conversely, assume \eqref{eq:newton-polygon-equality}.  If
$k\in\mathcal B(A)\cap\{1,\ldots,n-1\}$, then $P_k$ is a strict vertex of the
common upper convex hull.  A vertex of the convex hull of a finite set must be
one of the generating points.  Since there is only one classical coefficient
point with horizontal coordinate $n-k$, we have $Q_k=P_k$.  Thus
$\val(\boldsymbol\alpha_k)=a_k$.  The same conclusion for $k=n$ follows from
equality of the left endpoints at horizontal coordinate zero.  This gives
\eqref{eq:breakpoint-coefficient-equality}.
\end{proof}

\begin{proposition}
\label{prop:newton-spectral-equality}
Let $\rho_1,\ldots,\rho_n$ be the eigenvalues of $\mathbf A$, counted with
algebraic multiplicity.  Then the following statements are equivalent:
\begin{enumerate}[label=\textup{(\roman*)}]
 \item the eigenvalues can be ordered so that
 \[
  \val(\rho_i)=\eta_i,
  \qquad i=1,\ldots,n;
 \]
 \item $\NP(p_{\mathbf A})=\NP(f_A)$.
\end{enumerate}
\end{proposition}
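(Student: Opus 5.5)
The plan is to pass from eigenvalues to Newton polygons by the valued-field Newton polygon theorem, and then to use that both polygons are determined by their slope sequences and a common right endpoint.

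\textbf{Preliminary reduction.} Both $p_{\mathbf A}$ and $f_A$ are monic of degree $n$. The coefficient of $\lambda^{n-k}$ in $p_{\mathbf A}$ is $\pm\boldsymbol\alpha_k$, which has valuation $\val(\boldsymbol\alpha_k)$. The polygon $\NP(f_A)$ is the polygonal line through $P_n,\dots,P_0$ of Lemma~\ref{lem:essential-vertices}. Its slopes, read from left to right, are $-\eta_n,\dots,-\eta_1$, and each segment has horizontal length one.

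\textbf{Handling zero eigenvalues.} The theorem as quoted assumes $c_0\neq0$, so I first treat possibly vanishing low-order coefficients. Since the diagonal of $A$ is finite, $a_n>-\infty$. If $\boldsymbol\alpha_n=0$, then $\val(\boldsymbol\alpha_n)=-\infty\ne a_n$, so (ii) fails. Also $0$ is an eigenvalue of valuation $-\infty$, while all $\eta_i$ are finite, so (i) fails. Both statements are therefore false, and I may assume $\boldsymbol\alpha_n\neq0$.

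\textbf{(ii)$\Rightarrow$(i).} Apply the Newton polygon theorem. A segment of $\NP(p_{\mathbf A})$ with slope $m$ and horizontal length $\ell$ gives exactly $\ell$ roots of valuation $-m$. Hence the multiset $\{\val(\rho_i)\}$ is read off from the slopes, each repeated according to the horizontal length of its segment. If $\NP(p_{\mathbf A})=\NP(f_A)$, this multiset equals $\{\eta_1,\dots,\eta_n\}$, and ordering decreasingly gives (i).

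\textbf{(i)$\Rightarrow$(ii).} Assume $\val(\rho_i)=\eta_i$ for all $i$. An upper concave polygon on $[0,n]$ is determined by its right endpoint together with its multiset of slopes, counted with horizontal lengths. Integrating the slopes from the right endpoint recovers the polygon, since concavity forces the slopes to be arranged in nonincreasing order from left to right. By the Newton polygon theorem, the slope multiset of $\NP(p_{\mathbf A})$ is $\{-\val(\rho_i)\}=\{-\eta_i\}$, which is also the slope multiset of $\NP(f_A)$. Both polygons end at $(n,0)$ because both polynomials are monic. Hence the polygons coincide.

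\textbf{Main obstacle.} The only delicate point is to justify the Newton polygon theorem in this setting. It should be used over $\overline{\K}$, where $p_{\mathbf A}$ splits, together with the fact that the valuation extends uniquely, so that roots in $\overline{\K}$ are counted correctly. I expect this to be a citation to \cite[Section~2]{AkianBapatGaubert} rather than a genuine difficulty.
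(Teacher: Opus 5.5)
Your proposal is correct and takes essentially the same route as the paper: you read the valuations of the eigenvalues off the slopes of $\NP(p_{\mathbf A})$ via the Newton polygon theorem, compare them with the slopes $-\eta_n,\dots,-\eta_1$ of $\NP(f_A)$, and use that slopes, lengths and the common right endpoint $(n,0)$ determine the polygon. Two small differences are worth noting: you treat the case $\det\mathbf A=0$ separately, where the hypothesis $c_0\neq0$ of the Newton polygon theorem fails and which the paper leaves implicit, and you do not need the paper's appeal to real nonnegativity of the spectrum of a totally nonnegative matrix, since you work directly in $\overline{\K}$.
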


\begin{proof}
The spectral theorem for totally nonnegative matrices shows that the
eigenvalues of $\mathbf A$ are real and nonnegative, counted with algebraic
multiplicity; see \cite[Section~5.2]{FallatJohnson}.  The corresponding
statement is first-order expressible for every fixed matrix size.  Its version
over the real closed field $\K$ therefore follows from completeness of the
theory of real closed fields; see
\cite[Theorem~3.3.15 and Corollary~3.3.16]{Marker}.  The Newton
polygon theorem identifies the multiset
\[
 \{\val(\rho_1),\ldots,\val(\rho_n)\}
\]
with the negatives of the slopes of $\NP(p_{\mathbf A})$, each repeated
according to the horizontal length of its segment.

By \eqref{eq:tropical-segment-slope}, the corresponding multiset for
$\NP(f_A)$ is $\{\eta_1,\ldots,\eta_n\}$.  Equality of the two polygons therefore
implies (i).  Conversely, (i) gives the same ordered slopes with the same
horizontal lengths for both polygons.  Both polynomials are monic, so both
polygons have the right endpoint $(n,0)$.  Their slopes, lengths, and this common
endpoint determine the same polygonal line.
\end{proof}
The following theorem characterizes spectral equality through five equivalent
conditions.
\begin{theorem}
\label{thm:spectral-equality-characterization}
Under the standing hypotheses, the following statements are equivalent:
\begin{enumerate}[label=\textup{(\roman*)}]
 \item the eigenvalues of $\mathbf A$ can be ordered so that
 \[
  \val(\rho_i)=\eta_i,
  \qquad i=1,\ldots,n;
 \]
 \item $\NP(p_{\mathbf A})=\NP(f_A)$;
 \item for every $k\in\mathcal B(A)$,
 \[
  \val(\boldsymbol\alpha_k)=a_k;
 \]
 \item for every $k\in\mathcal B(A)$, there exists
 $I\in\mathcal I_k(A)$ such that
 \[
  \val\!\left(\det\mathbf A[I,I]\right)
  =\per(A[I,I])=a_k;
 \]
 \item for every $k\in\mathcal B(A)$, there exists
 $I\in\mathcal I_k(A)$ such that
 \[
  \Delta_{A[I,I]}\!\left(\mathbf A[I,I]\right)\neq0.
 \]
\end{enumerate}
\end{theorem}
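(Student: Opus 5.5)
The theorem is assembled from the three preceding propositions, so the plan is to chain equivalences rather than argue anything new. Throughout, the standing hypotheses are in force: $\mathbf A\in\TN(\K)$, $A=\val(\mathbf A)\in\TNT$, and all diagonal entries are finite. In particular \eqref{eq:ak-diagonal} holds, so every $a_k$ is finite and $\mathcal I_k(A)\neq\emptyset$.

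The first two links are direct citations.
\begin{itemize}
 \item (i)$\Leftrightarrow$(ii) is exactly Proposition~\ref{prop:newton-spectral-equality}.
 \item (ii)$\Leftrightarrow$(iii) is exactly Proposition~\ref{prop:newton-polygon-equality}.
\end{itemize}

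For the remaining links, fix $k\in\mathcal B(A)$ and apply Proposition~\ref{prop:coefficient-realization} to that $k$.
\begin{itemize}
 \item Its equivalence (i)$\Leftrightarrow$(ii) says $\val(\boldsymbol\alpha_k)=a_k$ holds if and only if some $I\in\mathcal I_k(A)$ has $\val(\det\mathbf A[I,I])=a_k$. For $I\in\mathcal I_k(A)$ we have $\per(A[I,I])=a_k$ by definition, so this is the same as the double equality $\val(\det\mathbf A[I,I])=\per(A[I,I])=a_k$ in (iv).
 \item Its equivalence (iii)$\Leftrightarrow$(iv), which rests on Lemma~\ref{lem:dominant-cancellation}, then converts this into the nonvanishing of the dominant determinant coefficient $\Delta_{A[I,I]}(\mathbf A[I,I])$, which is (v).
\end{itemize}
Quantifying over all $k\in\mathcal B(A)$ gives (iii)$\Leftrightarrow$(iv)$\Leftrightarrow$(v).

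Two details need a brief check.
\begin{itemize}
 \item Lemma~\ref{lem:dominant-cancellation} requires $\per(A[I,I])>-\infty$. This holds because $I\in\mathcal I_k(A)$ forces $\per(A[I,I])=a_k$, and $a_k$ is finite when the diagonal is finite.
 \item Proposition~\ref{prop:newton-spectral-equality} requires the eigenvalues of $\mathbf A$ to be real and nonnegative. This was already established there via the transfer principle for real closed fields. Zero eigenvalues cannot occur under spectral equality: $\alpha_n=\det\mathbf A$ and $n\in\mathcal B(A)$, so (iii) forces $\val(\det\mathbf A)=a_n>-\infty$ and hence $\det\mathbf A\neq0$.
\end{itemize}

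The only real obstacle is the endpoint bookkeeping in the Newton-polygon step, handled in Proposition~\ref{prop:newton-polygon-equality}. The point $k=n$ must be included in $\mathcal B(A)$, since otherwise a vanishing or cancelled determinant could shift the left endpoint unnoticed. That point is built into the definition \eqref{eq:breakpoint-set}, so the chain closes and the theorem follows.
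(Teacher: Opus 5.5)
Your proposal is correct and matches the paper's proof: (i)$\Leftrightarrow$(ii) by Proposition~\ref{prop:newton-spectral-equality}, (ii)$\Leftrightarrow$(iii) by Proposition~\ref{prop:newton-polygon-equality}, and (iii)$\Leftrightarrow$(iv)$\Leftrightarrow$(v) by Proposition~\ref{prop:coefficient-realization} at each $k\in\mathcal B(A)$. One small correction to a side remark: the Newton-polygon step does not need the eigenvalues to be real, since the theorem works over $\overline{\K}$; what it needs is a nonzero constant term $\det\mathbf A\neq0$, which you correctly derive from (iii) via $n\in\mathcal B(A)$, and which (i) and (ii) also force because the $\eta_i$ and $a_n$ are finite.
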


\begin{proof}
Proposition~\ref{prop:newton-spectral-equality} gives
(i)$\Longleftrightarrow$(ii), and
Proposition~\ref{prop:newton-polygon-equality} gives
(ii)$\Longleftrightarrow$(iii).  For every $k\in\mathcal B(A)$,
Proposition~\ref{prop:coefficient-realization} gives
(iii)$\Longleftrightarrow$(iv)$\Longleftrightarrow$(v).
\end{proof}

\begin{corollary}
\label{cor:distinct-diagonal}
If
\[
 \eta_1>\eta_2>\cdots>\eta_n,
\]
then $\mathcal B(A)=\{1,\ldots,n\}$.  Therefore, spectral equality holds if
and only if, for every $k=1,\ldots,n$, there exists
$I\in\mathcal I_k(A)$ such that
\[
 \Delta_{A[I,I]}\!\left(\mathbf A[I,I]\right)\neq0.
\]
\end{corollary}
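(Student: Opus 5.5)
The corollary is a direct specialization of Theorem~\ref{thm:spectral-equality-characterization}, so the plan is to compute the breakpoint set under the strict-ordering hypothesis and then read off condition (v).

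First, by the definition \eqref{eq:breakpoint-set}, $\mathcal B(A)$ consists of those $k\in\{1,\ldots,n-1\}$ with $\eta_k>\eta_{k+1}$, together with $n$. Under the hypothesis $\eta_1>\eta_2>\cdots>\eta_n$, every index $k=1,\ldots,n-1$ satisfies this strict inequality. Adding the index $n$, which is always included, gives $\mathcal B(A)=\{1,\ldots,n\}$. Geometrically, by Lemma~\ref{lem:essential-vertices}, every point $P_1,\ldots,P_{n-1}$ is then a strict vertex of the upper Newton polygon.

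Second, I apply the equivalence (i)$\Longleftrightarrow$(v) of Theorem~\ref{thm:spectral-equality-characterization}. Spectral equality means statement (i): the eigenvalues can be ordered so that $\val(\rho_i)=\eta_i$ for all $i$. Statement (v) requires that for each $k\in\mathcal B(A)$ some $I\in\mathcal I_k(A)$ satisfies $\Delta_{A[I,I]}(\mathbf A[I,I])\neq0$. Substituting $\mathcal B(A)=\{1,\ldots,n\}$ yields exactly the stated condition.

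There is no real obstacle. The only point to check is that the standing hypotheses of the section remain in force: $\mathbf A\in\TN(\K)$, $A\in\TNT$, and the diagonal entries are finite. The last of these is automatic here, since the $\eta_i$ are strictly ordered real numbers.
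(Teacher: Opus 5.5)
Your proposal is correct and is essentially the paper's own argument, which the paper leaves implicit: read off $\mathcal B(A)=\{1,\ldots,n\}$ from \eqref{eq:breakpoint-set}, then specialize (i)$\Longleftrightarrow$(v) of Theorem~\ref{thm:spectral-equality-characterization}. One small inaccuracy: finiteness of the diagonal does not follow from the strict ordering, since in $\Rmax$ one could have $\eta_n=-\infty$; it is simply part of the standing hypotheses under which the $\eta_i$ are defined.
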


\begin{corollary}
\label{cor:sign-nonsingular-spectral-equality}
Suppose that, for every $k\in\mathcal B(A)$, there exists
$I_k\in\mathcal I_k(A)$ such that $A[I_k,I_k]$ is tropically
sign-nonsingular.  Then the eigenvalues of $\mathbf A$ can be ordered so that
\[
 \val(\rho_i)=\eta_i,
 \qquad i=1,\ldots,n.
\]
\end{corollary}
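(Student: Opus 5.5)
The plan is to reduce Corollary~\ref{cor:sign-nonsingular-spectral-equality} to condition (v) of Theorem~\ref{thm:spectral-equality-characterization}. For each $k\in\mathcal B(A)$ I will show that the chosen index set $I_k$ satisfies
\[
 \Delta_{A[I_k,I_k]}\!\left(\mathbf A[I_k,I_k]\right)\neq0.
\]
Then implication (v)$\Rightarrow$(i) of that theorem gives the required ordering of the eigenvalues.

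Fix $k\in\mathcal B(A)$ and write $B=A[I_k,I_k]$ and $\mathbf B=\mathbf A[I_k,I_k]$. Since $I_k\in\mathcal I_k(A)$, we have $\per(B)=a_k$, and this is finite by \eqref{eq:ak-diagonal} because all diagonal entries are finite. By hypothesis, every $\sigma\in\mathcal M(B)$ has the same sign, say $\epsilon$. So \eqref{eq:dominant-coefficient} factors as
\[
 \Delta_B(\mathbf B)=\epsilon\sum_{\sigma\in\mathcal M(B)}\prod_{i}c_{i,\sigma(i)}.
\]
For every $\sigma\in\mathcal M(B)$, all entries $\mathbf b_{i,\sigma(i)}$ are nonzero, since their valuations are finite. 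Because $\mathbf A$ is totally nonnegative, each of these entries is a nonnegative, hence positive, element of $\K$. Therefore every $c_{i,\sigma(i)}=\lc(\mathbf b_{i,\sigma(i)})>0$. It follows that each product in the sum is positive, and $\mathcal M(B)$ is nonempty because the permanent is finite. Hence $\Delta_B(\mathbf B)\neq0$. Equivalently, Proposition~\ref{prop:det-per} applies directly, since the leading coefficients of the nonzero entries of $\mathbf B$ are all positive.

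The only point that needs care is the positivity of the leading coefficients. This uses the fact that entries of a totally nonnegative matrix over $\K$ are its $1\times1$ minors, and so lie in $\K_{\geq0}$. Once that is granted, the argument is a direct combination of Lemma~\ref{lem:dominant-cancellation}, or Proposition~\ref{prop:det-per}, with Theorem~\ref{thm:spectral-equality-characterization}. I do not expect a real obstacle.
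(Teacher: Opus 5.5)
Your proposal is correct and follows the paper's proof. Like the paper, you verify condition (v) of Theorem~\ref{thm:spectral-equality-characterization} at each breakpoint using the no-cancellation argument of Proposition~\ref{prop:det-per} and Lemma~\ref{lem:dominant-cancellation}; you also make explicit a point the paper leaves implicit, namely that the relevant leading coefficients are positive because the entries of a totally nonnegative matrix are nonnegative $1\times1$ minors.
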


\begin{proof}
Proposition~\ref{prop:det-per} and
Lemma~\ref{lem:dominant-cancellation} give
\[
 \Delta_{A[I_k,I_k]}\!\left(\mathbf A[I_k,I_k]\right)\neq0
 \qquad(k\in\mathcal B(A)).
\]
Condition (v) of Theorem~\ref{thm:spectral-equality-characterization} applies.
\end{proof}

The last corollary strengthens the earlier sufficient criterion in two ways.
Only the breakpoint indices $k\in\mathcal B(A)$ need to be tested, and at each
such index one may use any member of $\mathcal I_k(A)$, rather than a fixed
principal submatrix selected in advance.
\begin{remark}
The general Newton polygon principle already implies that equality of the
classical and tropical coefficient valuations yields equality of the root
valuations.  Theorem~\ref{thm:spectral-equality-characterization} identifies
the obstruction for totally nonnegative lifts.  It is dominant cancellation
in the principal minors that give the essential tropical coefficients.  The
theorem also shows that, when diagonal values are repeated, we only need to
check the indices in $\mathcal B(A)$.
Corollary~\ref{cor:sign-nonsingular-spectral-equality} gives a simpler tropical
sufficient condition.
\end{remark}
It is useful to compare this characterization with the earlier results.
The generic theorem of Akian--Bapat--Gaubert
\cite[Theorem~7.4]{AkianBapatGaubert} concerns an unrestricted parameter
space and asserts equality outside a proper algebraic exceptional set.
The result of Gaubert--Niv
\cite[Proposition~6.5]{GaubertNiv} gives equality under tropical and
classical total positivity. Theorem~
\ref{thm:spectral-equality-characterization} instead gives an exact
pointwise criterion inside the space of totally nonnegative lifts.
In particular, it applies to lifts that are not totally positive and
also detects the totally nonnegative lifts for which spectral equality
fails.
\subsection{Exceptional loci in the totally nonnegative lift space}

We now study the admissible leading-coefficient space. Unlike the
unrestricted parameter space, this space may be reducible. Some of its
irreducible components may lie entirely in a dominant-cancellation set.
Therefore, generic spectral equality must be studied separately on each
irreducible component.

Let
\[
 E(A)=\{(i,j):a_{ij}\neq-\infty\},\qquad N=|E(A)|.
\]
For a finite entry of $A$, write
\[
 \mathbf a_{ij}=c_{ij}t^{a_{ij}}+\text{terms of smaller valuation},
 \qquad c_{ij}>0,
\]
and define the admissible leading-coefficient space by
\[
 \mathcal C_{\TN}(A)
 =\left\{c\in\mathbb R_{>0}^{E(A)}:
 \begin{array}{l}
 \text{there exists }\mathbf A\in\TN(\K)^{n\times n}\text{ such that}\\
 \val(\mathbf A)=A\text{ and }
 \operatorname{lc}(\mathbf a_{ij})=c_{ij}
 \text{ for every }(i,j)\in E(A)
 \end{array}\right\}.
\]
Thus the lower-order terms may vary, while the support,
valuations, and leading coefficients are prescribed.  In the formulas below
we put $c_{ij}=0$ when $(i,j)\notin E(A)$.  All Zariski closures,
irreducible components, and vanishing ideals in this subsection are taken
over $\mathbb R$.  For
$I\in\mathcal I_k(A)$, set
\[
 \Delta_I(c)
 =\sum_{\sigma\in\operatorname{Opt}(A[I,I])}
   \operatorname{sgn}(\sigma)\prod_{i\in I}c_{i,\sigma(i)},
\]
where $\operatorname{Opt}(A[I,I])$ is the set of maximizing permutations of
$A[I,I]$, and define
\begin{equation}\label{eq:Dk-definition}
 D_k(c)=\sum_{I\in\mathcal I_k(A)}\Delta_I(c).
\end{equation}
The following theorem describes the exceptional locus and gives the
componentwise dichotomy for spectral equality.
\begin{theorem}
\label{thm:generic-spectral-equality}
Under the standing hypotheses, the following statements hold.
\begin{enumerate}[label=\textup{(\roman*)}]
 \item A totally nonnegative lift with leading-coefficient vector $c$ has
 spectral equality if and only if
 \[
  D_k(c)\neq0\qquad\text{for every }k\in\mathcal B(A).
 \]
 \item The exceptional set on which spectral equality fails is
 \begin{equation}\label{eq:exceptional-set}
  \mathcal E(A)=\mathcal C_{\TN}(A)\cap
  \bigcup_{k\in\mathcal B(A)}V(D_k).
 \end{equation}
 Thus, relative to the admissible parameter space, failure is described by
 finitely many polynomial equations in the leading coefficients.
 \item Put $D=\prod_{k\in\mathcal B(A)}D_k$, and let $Z$ be an irreducible
 component of the Zariski closure of $\mathcal C_{\TN}(A)$.  Exactly one of
 the following alternatives occurs:
 \begin{enumerate}[label=\textup{(\alph*)}]
  \item $Z$ is spectrally obstructed: for some $k\in\mathcal B(A)$,
  \[
   D_k|_Z\equiv0.
  \]
  Every admissible lift with leading-coefficient vector in $Z$ then fails to
  have spectral equality.
  \item $Z$ is spectrally nonobstructed: $D|_Z\not\equiv0$.  Spectral equality
  then holds at every admissible point of the Zariski-open dense subset
  $Z\setminus V(D)$; equivalently, it holds on
  \[
   \mathcal C_{\TN}(A)\cap\bigl(Z\setminus V(D)\bigr).
  \]
  No density assertion for this intersection inside the admissible part of
  $Z$ is needed.
 \end{enumerate}
 Also, if $I(Z)$ is the prime ideal of $Z$, then $Z$ is spectrally
 obstructed if and only if $D_k\in I(Z)$ for at least one
 $k\in\mathcal B(A)$.
\end{enumerate}
\end{theorem}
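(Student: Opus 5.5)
The plan is to get part~(i) from a direct leading-coefficient computation for $\boldsymbol\alpha_k$, and then to deduce parts~(ii) and~(iii) by formal arguments about real polynomials restricted to an irreducible variety.

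\textbf{Step 1: the leading coefficient of $\boldsymbol\alpha_k$.} Fix an admissible lift $\mathbf A$ with leading-coefficient vector $c$. Expand $\boldsymbol\alpha_k$ as the sum \eqref{eq:alpha-principal-sum} of principal minors, and apply \eqref{eq:det-leading-expansion} to each minor.
\begin{itemize}
\item For $I\notin\mathcal I_k(A)$ we have $\per(A[I,I])<a_k$, so $\det\mathbf A[I,I]$ has valuation strictly below $a_k$ by Lemma~\ref{lem:dominant-cancellation}.
\item For $I\in\mathcal I_k(A)$, the coefficient of $t^{a_k}$ in $\det\mathbf A[I,I]$ is $\Delta_{A[I,I]}(\mathbf A[I,I])=\Delta_I(c)$.
\end{itemize}
Summing gives
\[
 \boldsymbol\alpha_k=D_k(c)\,t^{a_k}+\text{terms of valuation smaller than }a_k .
\]
Hence $\val(\boldsymbol\alpha_k)=a_k$ if and only if $D_k(c)\neq0$.

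This is the one point needing care: it shows the criterion depends only on $c$, and not on the lower-order terms of the lift. As a consistency check with condition~(v) of Theorem~\ref{thm:spectral-equality-characterization}, total nonnegativity makes each minor nonnegative. So each $\Delta_I(c)$, being either zero or the positive leading coefficient of $\det\mathbf A[I,I]$, is $\geq0$. Thus $D_k(c)\neq0$ exactly when some $\Delta_I(c)\neq0$, so no cancellation between different $I$ can occur.

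\textbf{Step 2: parts (i) and (ii).} Combining Step~1 with (i)$\Leftrightarrow$(iii) of Theorem~\ref{thm:spectral-equality-characterization} gives part~(i). Part~(ii) is then a restatement: $c\in\mathcal C_{\TN}(A)$ is exceptional exactly when $D_k(c)=0$ for some $k\in\mathcal B(A)$. That is, $c$ lies in $\bigcup_{k\in\mathcal B(A)}V(D_k)$, which is cut out by the finitely many polynomials $D_k$.

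\textbf{Step 3: part (iii).} Let $Z$ be an irreducible component of the Zariski closure, with prime ideal $I(Z)\subseteq\mathbb R[c]$. By definition of the vanishing ideal, $D_k|_Z\equiv0$ if and only if $D_k\in I(Z)$; this is the ideal-membership statement.

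Since $I(Z)$ is prime, $D=\prod_{k\in\mathcal B(A)}D_k$ lies in $I(Z)$ if and only if some $D_k$ does. Therefore alternatives~(a) and~(b) are complementary, and exactly one holds.
\begin{itemize}
\item In case~(a), every admissible $c\in Z$ has $D_k(c)=0$ for the offending $k$, so spectral equality fails by part~(i). The statement is vacuous if $Z$ contains no admissible points.
\item In case~(b), $Z\cap V(D)$ is a proper closed subset of the irreducible $Z$. Its complement $Z\setminus V(D)$ is therefore Zariski-open and dense in $Z$. Every admissible point of this complement has all $D_k(c)\neq0$, so spectral equality holds there by part~(i).
\end{itemize}
As the theorem itself remarks, no density claim for the admissible part is needed.

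The main obstacle is Step~1. The rest is bookkeeping with Lemma~\ref{lem:dominant-cancellation}, Theorem~\ref{thm:spectral-equality-characterization}, and primality of $I(Z)$.
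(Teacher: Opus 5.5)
Your proposal is correct and follows essentially the same route as the paper: you identify $D_k(c)$ as the coefficient of $t^{a_k}$ in $\boldsymbol\alpha_k$, invoke Theorem~\ref{thm:spectral-equality-characterization} for parts (i) and (ii), and use primality of $I(Z)$ for the dichotomy in (iii). Your one refinement is that you extract this coefficient by linearity alone, whereas the paper invokes nonnegativity of the principal minors at that step; in the other direction, the paper briefly justifies why $I(Z)$ is prime, which you take from the statement.
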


\begin{proof}
For $I\in\mathcal I_k(A)$, the terms of valuation $a_k$ in
$\det\mathbf A[I,I]$ have total coefficient $\Delta_I(c)$.  Since all
principal minors of $\mathbf A$ are nonnegative, each $\Delta_I(c)$ is
nonnegative, and principal minors belonging to distinct index sets cannot
cancel one another.  Therefore, the coefficient of $t^{a_k}$ in
$\boldsymbol\alpha_k$ is $D_k(c)$, and
\[
 \val(\boldsymbol\alpha_k)=a_k
 \quad\Longleftrightarrow\quad D_k(c)\neq0.
\]
Part~(i) follows from
Theorem~\ref{thm:spectral-equality-characterization}, and part~(ii) follows
directly.

For part~(iii), irreducibility of the real algebraic set $Z$ implies that its
vanishing ideal $I(Z)$ is prime.  Indeed, if $fg$ vanishes on $Z$, then
$Z\subseteq V(f)\cup V(g)$, so irreducibility gives
$Z\subseteq V(f)$ or $Z\subseteq V(g)$.  If $D|_Z\equiv0$, then
\[
 \prod_{k\in\mathcal B(A)}D_k\in I(Z),
\]
so primality gives $D_k\in I(Z)$ for at least one breakpoint $k$.  Part~(i)
then shows that every admissible point of $Z$ is spectrally exceptional.
Conversely, $D_k\in I(Z)$ clearly implies $D|_Z\equiv0$.  If
$D|_Z\not\equiv0$, then $V(D)\cap Z$ is a proper Zariski-closed subset of
$Z$.  Thus $Z\setminus V(D)$ is Zariski open and dense in $Z$, and part~(i)
identifies all its admissible points with lifts having spectral equality.
\end{proof}

The next corollary turns the obstruction criterion into a Gr\"obner-basis
membership test.
\begin{corollary}
\label{cor:ideal-obstruction-test}
Suppose that the prime ideal $I(Z)$ of the irreducible component $Z$ is known
by generators, and let $G_Z$ be a Gr\"obner basis of $I(Z)$ with respect to a
fixed monomial order.  Then $Z$ is
spectrally obstructed if and only if, for some $k\in\mathcal B(A)$, the
remainder of $D_k$ upon division by $G_Z$ is zero.
\end{corollary}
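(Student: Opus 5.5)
The corollary follows directly from Theorem~\ref{thm:generic-spectral-equality}(iii), combined with the standard fact that, for a Gr\"obner basis, reduction decides ideal membership. The plan is to show that, for each fixed $k\in\mathcal B(A)$,
\[
 D_k\in I(Z)\iff \text{the remainder of } D_k \text{ upon division by } G_Z \text{ is } 0,
\]
and then to take the disjunction over the finitely many $k\in\mathcal B(A)$.

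First, I would record that $D_k$, defined in \eqref{eq:Dk-definition}, is a polynomial with real (indeed integer) coefficients in the variables $c_{ij}$, $(i,j)\in E(A)$. It is a finite signed sum of monomials $\prod_{i\in I}c_{i,\sigma(i)}$. The entries with $(i,j)\notin E(A)$ never occur in a maximizing permutation of a principal submatrix with finite permanent $a_k$, so setting them equal to $0$ changes nothing. Hence $D_k\in\mathbb R[c_{ij}:(i,j)\in E(A)]$, which is the polynomial ring containing $I(Z)$. Division by $G_Z$ therefore makes sense for a fixed monomial order.

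Second, I would invoke the basic property of Gr\"obner bases. If $G_Z$ is a Gr\"obner basis of an ideal $J$, then the remainder $r$ of any $f$ upon division by $G_Z$ is unique, $f-r\in J$, and no monomial of $r$ is divisible by a leading monomial of $G_Z$. Consequently $f\in J$ if and only if $r=0$. The direction ``$r=0\Rightarrow f\in J$'' is immediate from $f-r\in J$. For the converse, if $f\in J$ then $r=f-(f-r)\in J$. If $r$ were nonzero, its leading monomial would lie in $\langle\operatorname{LT}(J)\rangle=\langle\operatorname{LT}(G_Z)\rangle$, and so would be divisible by some leading term of $G_Z$, which contradicts the defining property of the remainder. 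Applying this with $J=I(Z)$ and $f=D_k$ gives the displayed equivalence.

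Third, I would combine this with the last assertion of Theorem~\ref{thm:generic-spectral-equality}(iii): $Z$ is spectrally obstructed if and only if $D_k\in I(Z)$ for at least one $k\in\mathcal B(A)$. Substituting the equivalence for each $k$ yields the corollary.

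There is no serious obstacle. The only points needing care are that the ideal membership is taken over $\mathbb R$, consistent with the convention that all vanishing ideals in this subsection are real, and that primality of $I(Z)$, although used in the theorem, is not needed for the membership test itself.
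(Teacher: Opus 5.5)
Your proposal is correct and follows the paper's proof. The paper likewise notes that the remainder of $D_k$ on division by $G_Z$ is zero exactly when $D_k\in I(Z)$, then applies the last assertion of Theorem~\ref{thm:generic-spectral-equality}(iii); your check that $D_k\in\mathbb{R}[c_{ij}:(i,j)\in E(A)]$ and your proof of the Gr\"obner membership criterion supply standard details the paper leaves implicit.
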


\begin{proof}
The remainder is zero exactly when $D_k\in I(Z)$.  Apply
Theorem~\ref{thm:generic-spectral-equality}(iii).
\end{proof}

\begin{remark}
Corollary~\ref{cor:ideal-obstruction-test} is an effective ideal-membership
test once $I(Z)$ has been computed.  It is not an algorithm for finding the
irreducible components of the Zariski closure of $\mathcal C_{\TN}(A)$; that
elimination problem is separate and may be the computationally difficult
part.
\end{remark}

\begin{remark}
Let $K_0$ be a compact subset of
\[
 \mathcal C_{\TN}(A)\setminus
 \bigcup_{k\in\mathcal B(A)}V(D_k).
\]
There exists $\delta>0$ such that
\[
 D_k(c)\geq\delta
 \qquad(c\in K_0,\ k\in\mathcal B(A)).
\]
Therefore, spectral equality remains valid under all sufficiently small
perturbations of the leading coefficients that remain in the admissible lift
space and sufficiently close to $K_0$.  Indeed, every $D_k$ is continuous and
strictly positive on $K_0$: nonvanishing and
total nonnegativity together give positivity.  Compactness and the finiteness
of $\mathcal B(A)$ yield a common positive minimum $\delta$.  Continuity then
gives the perturbation statement.
\end{remark}
The following family contains both ordinary lifts with spectral equality and
exceptional totally nonnegative lifts without it.
\begin{example}
\label{ex:parametric-exceptional-locus}
Consider
\[
 A=\begin{pmatrix}
 10&9&4\\
 9&8&4\\
 4&4&2
 \end{pmatrix}.
\]
The matrix \(A\) belongs to 
$(\TNT)^{3\times3}\setminus(\TPT)^{3\times3}$ and, since
$10>8>2$, we have $\mathcal B(A)=\{1,2,3\}$.

For $r,s>0$, consider
\[
 \mathbf A(r,s)=\begin{pmatrix}
 t^{10}&s t^9&t^4\\
 t^9&r t^8&t^4\\
 t^4&t^4&t^2
 \end{pmatrix}.
\]
In this two-parameter slice, the dominant coefficient polynomials are
\[
 D_1(r,s)=1,\qquad D_2(r,s)=r-s,\qquad D_3(r,s)=r-s.
\]
Thus the exceptional algebraic locus is the line $r=s$.  If $r>s>0$, the
only tied $2\times2$ minor is
\[
 \det\begin{pmatrix}t^{10}&s t^9\\t^9&r t^8\end{pmatrix}
 =(r-s)t^{18}>0,
\]
and every other $2\times2$ minor has a unique positive dominant term.
Also,
\[
 \det\mathbf A(r,s)
 =(r-s)t^{20}-t^{18}+(s+1)t^{17}-r t^{16}>0.
\]
Thus $\mathbf A(r,s)$ is totally positive and has spectral equality whenever
$r>s>0$.

The exceptional line also contains leading-coefficient vectors of totally
nonnegative lifts.  For $0<\varepsilon<2$, set
\[
 \mathbf A_\varepsilon=\begin{pmatrix}
 t^{10}&t^9&t^4\\
 t^9&t^8+t^{8-\varepsilon}&t^4\\
 t^4&t^4&t^2
 \end{pmatrix}.
\]
Its upper-left principal minor is
\[
 t^{10}(t^8+t^{8-\varepsilon})-t^{18}
 =t^{18-\varepsilon}>0.
\]
All other $2\times2$ minors have a unique positive dominant term, and
\[
 \det\mathbf A_\varepsilon
 =t^{20-\varepsilon}-t^{18}+2t^{17}-t^{16}-t^{16-\varepsilon}>0.
\]
Therefore $\mathbf A_\varepsilon\in\TN(\K)^{3\times3}$.  Nevertheless,
\[
 \val\!\left(\det\mathbf A_\varepsilon[\{1,2\},\{1,2\}]\right)
 =18-\varepsilon<18
\]
and
\[
 \val(\det\mathbf A_\varepsilon)=20-\varepsilon<20.
\]
Thus spectral equality fails for this exceptional lift.  The example shows
that the cancellation set can meet the totally nonnegative class.  It also
shows why Theorem~\ref{thm:generic-spectral-equality} must be stated separately
on each irreducible component.
\end{example}
\subsection{A complete characterization in dimension three}

Assume now that $n=3$, and let
\[
 d_1\geq d_2\geq d_3
\]
be the decreasing rearrangement of $a_{11},a_{22},a_{33}$.  For
$1\leq i<j\leq3$, put
\begin{equation}\label{eq:Delta-ij}
 \Delta_{ij}
 =c_{ii}c_{jj}
 -\mathbf1_{\{a_{ii}+a_{jj}=a_{ij}+a_{ji}\}}c_{ij}c_{ji},
\end{equation}
and define
\begin{equation}\label{eq:D2-three}
 \mathcal P_2(A)=
 \bigl\{\{i,j\}:a_{ii}+a_{jj}=d_1+d_2\bigr\},
 \qquad
 D_2(c)=\sum_{\{i,j\}\in\mathcal P_2(A)}\Delta_{ij}.
\end{equation}
Finally, for $w_\sigma(A)=\sum_i a_{i,\sigma(i)}$, set
\begin{equation}\label{eq:Delta-123}
 \Delta_{123}
 =\sum_{\substack{\sigma\in S_3\\
 w_\sigma(A)=d_1+d_2+d_3}}
 \operatorname{sgn}(\sigma)\prod_{i=1}^3c_{i,\sigma(i)}.
\end{equation}

For reference, all principal minors that can contribute in dimension three
and their dominant data are displayed below.  A row of order $k$ contributes
to the $k$th characteristic coefficient at the tropical leading weight only
when the middle entry equals $d_1+\cdots+d_k$.
\begin{equation}\label{eq:three-principal-minor-table}
\begin{array}{c|c|c}
 I&\operatorname{per}(A[I,I])&
 \text{coefficient at the dominant weight}\\
\hline
\{1\}&a_{11}&c_{11}\\
\{2\}&a_{22}&c_{22}\\
\{3\}&a_{33}&c_{33}\\
\{1,2\}&a_{11}+a_{22}&\Delta_{12}\\
\{1,3\}&a_{11}+a_{33}&\Delta_{13}\\
\{2,3\}&a_{22}+a_{33}&\Delta_{23}\\
\{1,2,3\}&d_1+d_2+d_3&\Delta_{123}
\end{array}
\end{equation}
The middle column uses the Monge property: the identity permutation maximizes
every principal submatrix.  In particular, the table retains all three
principal minors of order two rather than selecting only one maximizer.

The following corollary gives a complete spectral-equality criterion in
dimension three.
\begin{corollary}
\label{cor:complete-three-by-three}
Let $\mathbf A\in\TN(\K)^{3\times3}$, let $A=\val(\mathbf A)$, and suppose
that the diagonal entries of $A$ are finite.  Then the eigenvalues of
$\mathbf A$ can be ordered so that
\[
 \val(\rho_i)=d_i,\qquad i=1,2,3,
\]
if and only if
\begin{equation}\label{eq:three-det-condition}
 \Delta_{123}>0
\end{equation}
and, whenever $d_2>d_3$,
\begin{equation}\label{eq:three-second-condition}
 D_2(c)>0.
\end{equation}
In other words, if $d_2>d_3$, both
\eqref{eq:three-det-condition} and \eqref{eq:three-second-condition} are
required; if $d_2=d_3$, condition \eqref{eq:three-det-condition} alone is
necessary and sufficient.
\end{corollary}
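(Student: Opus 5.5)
The plan is to specialize Theorem~\ref{thm:generic-spectral-equality}(i), or equivalently condition (v) of Theorem~\ref{thm:spectral-equality-characterization}, to $n=3$. The first step is to compute the breakpoint set. Since $\mathcal B(A)$ always contains $n=3$, the indices to test are $k=3$ always, $k=2$ exactly when $d_2>d_3$, and $k=1$ exactly when $d_1>d_2$. Spectral equality is then equivalent to $D_k(c)\neq0$ for these $k$, where $D_k$ is the sum \eqref{eq:Dk-definition} over $\mathcal I_k(A)$.

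Next I identify each $D_k$ with the quantities in the statement, using the table \eqref{eq:three-principal-minor-table}.

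\begin{enumerate}[label=\textup{(\alph*)}]
\item \emph{The case $k=1$.} The maximizing singletons are the diagonal positions with $a_{ii}=d_1$. Here $D_1=\sum c_{ii}$ over these positions, which is strictly positive because the leading coefficients of finite entries of a lift in $\TN(\K)$ are positive. So $k=1$ imposes no condition, which is why it is absent from the statement.
\item \emph{The case $k=2$.} By Theorem~\ref{thm:diagonal-spectrum} and the Monge property, the identity maximizes every principal $2\times2$ submatrix. So $\per(A[\{i,j\},\{i,j\}])=a_{ii}+a_{jj}$, and $\mathcal I_2(A)=\mathcal P_2(A)$. The optimal permutations of $A[\{i,j\},\{i,j\}]$ are the identity, together with the transposition when $a_{ij}+a_{ji}=a_{ii}+a_{jj}$. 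If some $a_{ij}=-\infty$, then $c_{ij}=0$ and the transposition is not optimal anyway. Hence $\Delta_I(c)=\Delta_{ij}$ as in \eqref{eq:Delta-ij}, and $D_2$ agrees with \eqref{eq:D2-three}.
\item \emph{The case $k=3$.} We have $\mathcal I_3(A)=\{[3]\}$, $\per(A)=d_1+d_2+d_3$, and $D_3=\Delta_{123}$ as in \eqref{eq:Delta-123}.
\end{enumerate}

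It remains to convert ``$\neq0$'' into ``$>0$''. The key point is that each $\Delta_I(c)$ is the coefficient of $t^{\per(A[I,I])}$ in $\det\mathbf A[I,I]$; this is the expansion \eqref{eq:det-leading-expansion} in Lemma~\ref{lem:dominant-cancellation}. That minor is a nonnegative element of $\K$ because $\mathbf A$ is totally nonnegative. A nonnegative series has either positive leading coefficient at its valuation, or valuation strictly below the given exponent, or it is zero. In every case the coefficient at $t^{\per}$ is $\ge0$. Hence $\Delta_{ij}\ge0$ and $\Delta_{123}\ge0$, and therefore $D_2\ge0$. So ``nonzero'' is the same as ``positive'', which gives \eqref{eq:three-det-condition} and \eqref{eq:three-second-condition}. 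The case split on $d_2=d_3$ versus $d_2>d_3$ is then exactly the membership of $2$ in $\mathcal B(A)$.

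The main obstacle I anticipate is in the $k=2$ step. Identifying the optimal permutation sets of the $2\times2$ principal submatrices needs care with $-\infty$ off-diagonal entries and with index pairs whose diagonal sum ties $d_1+d_2$. In particular, when $d_1=d_2=d_3$ all three pairs lie in $\mathcal P_2$. This matters only if $d_2>d_3$, though, and then $\mathcal P_2$ consists of pairs containing the top two diagonal values. I would handle it by the Monge inequality applied to the $2\times2$ submatrix, together with the convention $c_{ij}=0$ off the support.
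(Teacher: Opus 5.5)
Your proposal is correct and follows the paper's proof essentially step for step. You compute $\mathcal B(A)$, dispose of $k=1$ by positivity of the diagonal leading coefficients, and identify the $k=2$ and $k=3$ dominant coefficients with $D_2(c)$ and $\Delta_{123}$ via the Monge property, then upgrade nonvanishing to positivity using nonnegativity of the principal minors; the only cosmetic difference is that you phrase the test as $D_k(c)\neq0$ (Theorem~\ref{thm:generic-spectral-equality}(i)) rather than directly as $\val(\boldsymbol\alpha_k)=a_k$, which amounts to the same thing.
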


\begin{proof}
The breakpoint set is
\[
 \mathcal B(A)=
 \{1:d_1>d_2\}\cup\{2:d_2>d_3\}\cup\{3\}.
\]
The first classical coefficient is the trace of $\mathbf A$.  Its summands are
nonnegative, so
\[
 \val(\boldsymbol\alpha_1)=d_1;
\]
hence the breakpoint $k=1$, when present, creates no additional condition.

For $k=2$, the maximizing principal index sets are exactly the members of
$\mathcal P_2(A)$.  The Monge inequalities give
$a_{ii}+a_{jj}\geq a_{ij}+a_{ji}$, and the dominant coefficient of the
corresponding principal minor is \eqref{eq:Delta-ij}.  Total nonnegativity
gives $\Delta_{ij}\geq0$, so
\[
 \val(\boldsymbol\alpha_2)=d_1+d_2
 \quad\text{if and only of}\quad D_2(c)>0.
\]
This condition is spectrally essential exactly when $d_2>d_3$.

For $k=3$, the only principal index set is $\{1,2,3\}$.  Since the identity
permutation maximizes a Monge matrix, its tropical permanent is
$d_1+d_2+d_3$, and its dominant determinant coefficient is
$\Delta_{123}$.  Therefore
\[
 \val(\det\mathbf A)=d_1+d_2+d_3
 \quad\text{if and only of}\quad\Delta_{123}\neq0.
\]
Total nonnegativity gives $\Delta_{123}\geq0$, so nonvanishing is equivalent
to strict positivity.  The result follows from
Theorem~\ref{thm:spectral-equality-characterization}.
\end{proof}

The criterion takes the following explicit form for the four possible patterns
of the diagonal entries.
\begin{corollary}
\label{cor:three-by-three-diagonal-patterns}
Under the hypotheses of Corollary~\ref{cor:complete-three-by-three}, the four
possible diagonal patterns give
\[
\begin{array}{c|c|c}
\text{diagonal pattern}&\mathcal B(A)&\text{spectral equality conditions}\\
\hline
d_1>d_2>d_3&\{1,2,3\}&D_2(c)>0,\ \Delta_{123}>0\\
d_1=d_2>d_3&\{2,3\}&D_2(c)>0,\ \Delta_{123}>0\\
d_1>d_2=d_3&\{1,3\}&\Delta_{123}>0\\
d_1=d_2=d_3&\{3\}&\Delta_{123}>0.
\end{array}
\]
In particular, when $d_2=d_3$, dominant cancellation in the second
coefficient does not affect the spectrum because the corresponding tropical
coefficient point is not a strict Newton-polygon vertex.
\end{corollary}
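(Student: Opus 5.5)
This corollary is a case-by-case unpacking of Corollary~\ref{cor:complete-three-by-three}. We first compute the breakpoint set $\mathcal B(A)$ for each diagonal pattern. Then we read off which of the two conditions \eqref{eq:three-det-condition} and \eqref{eq:three-second-condition} are required. Since $n=3$ and the $\eta_i$ are the $d_i$, the definition \eqref{eq:breakpoint-set} gives
\[
 \mathcal B(A)=\{1:d_1>d_2\}\cup\{2:d_2>d_3\}\cup\{3\},
\]
as already recorded in the proof of Corollary~\ref{cor:complete-three-by-three}. Substituting the four patterns yields $\{1,2,3\}$, $\{2,3\}$, $\{1,3\}$ and $\{3\}$, matching the middle column.

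For the third column, we apply Corollary~\ref{cor:complete-three-by-three} directly. The condition $\Delta_{123}>0$ is always required, because $3\in\mathcal B(A)$. The condition $D_2(c)>0$ is required exactly when $d_2>d_3$, that is, in the first two rows. The index $k=1$, when it lies in $\mathcal B(A)$, contributes no condition, since the trace has nonnegative summands and $\val(\boldsymbol\alpha_1)=d_1$ automatically. This is why the first and second rows carry the same conditions even though $1$ appears in $\mathcal B(A)$ only in the first. As a consistency check, one may instead use Theorem~\ref{thm:generic-spectral-equality}(i) row by row. In dimension three, $D_2$ of \eqref{eq:D2-three} coincides with $D_2$ of \eqref{eq:Dk-definition}, because $\mathcal I_2(A)=\mathcal P_2(A)$ by the Monge property. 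Likewise $D_3=\Delta_{123}$, and $D_1=\sum_{i:a_{ii}=d_1}c_{ii}>0$. Nonnegativity from total nonnegativity turns each condition ``$\neq0$'' into ``$>0$''.

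The final sentence then follows from Lemma~\ref{lem:essential-vertices}. When $d_2=d_3$, the point $P_2$ is not a strict vertex, so $2\notin\mathcal B(A)$. By Proposition~\ref{prop:newton-polygon-equality}, a drop of $\val(\boldsymbol\alpha_2)$ below $a_2$ leaves $\NP(p_{\mathbf A})$ unchanged, and so it leaves the spectrum unchanged. I expect no genuine obstacle here. The only point needing care is to state explicitly why the $k=1$ breakpoint imposes nothing, so that the first and second rows legitimately share the same conditions.
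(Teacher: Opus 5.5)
Your proposal is correct and follows the paper's route: the paper gives no separate proof, and the table is read off from Corollary~\ref{cor:complete-three-by-three} by computing $\mathcal B(A)$ for each pattern, with $k=1$ imposing nothing because $\val(\boldsymbol\alpha_1)=d_1$ automatically. One correction to your last paragraph: a drop of $\val(\boldsymbol\alpha_2)$ leaves $\NP(p_{\mathbf A})$ unchanged only when $\Delta_{123}>0$ (if $\val(\det\mathbf A)<a_3$, the point $(1,\val(\boldsymbol\alpha_2))$ can be a vertex of the classical polygon, and the root valuations then depend on it), so what Proposition~\ref{prop:newton-polygon-equality} actually gives, and what the final sentence of the statement should be taken to mean, is that \emph{whether} spectral equality holds does not depend on $\val(\boldsymbol\alpha_2)$ when $d_2=d_3$.
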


\section{Concluding remarks}

Tropical total nonnegativity, its Monge characterization, and non-Archimedean
valuation give a simple description of the algebraic tropical spectrum.  The
weak majorization inequality for eigenvalue valuations also has a direct
multiplicative form as weak log-majorization.

Theorem~\ref{thm:spectral-equality-characterization} gives the exact condition
for equality beyond total positivity.  At every essential breakpoint, at
least one maximizing principal minor must have a nonzero dominant
coefficient.  Thus the main issue is dominant cancellation in the relevant
principal minors, not sign-nonsingularity alone.

Corollary~\ref{cor:sign-nonsingular-spectral-equality} gives a simple
sufficient condition that can be checked directly.
Example~\ref{ex:parametric-exceptional-locus} shows that this condition is not
necessary.  Theorem~\ref{thm:generic-spectral-equality} describes the failure
set by polynomial equations on each irreducible component of the admissible
parameter space.  Corollary~\ref{cor:complete-three-by-three} gives an explicit
test in dimension three, and Example~\ref{ex:parametric-exceptional-locus}
shows both possible cases in one family.

Our results suggest two directions for further study.  First, it would
be useful to describe, directly in terms of the tropical matrix \(A\),
the irreducible components of the Zariski closure of
\(\mathcal C_{\TN}(A)\).  The ideal-membership test given above assumes
that the ideals of these components are already known and does not
provide a method for computing them.  Second, one may seek quantitative
bounds for the changes in the eigenvalue valuations caused by dominant
cancellation, for example in terms of the coefficient drops
\[
 a_k-\val(\boldsymbol\alpha_k)
\]
or the distance of the leading-coefficient vector from the exceptional
locus.  We do not pursue these questions here.

\section*{Acknowledgment}
The authors acknowledge the support of the Department of Mathematical Sciences
at Isfahan University of Technology, Iran.


\begin{thebibliography}{99}

\bibitem{AkianBapatGaubert}
M.~Akian, R.~B. Bapat, and S.~Gaubert,
\emph{Non-Archimedean valuations of eigenvalues of matrix polynomials},
Linear Algebra Appl. \textbf{498} (2016), 592--627.

\bibitem{AkianGaubertSharify}
M.~Akian, S.~Gaubert, and M.~Sharify,
\emph{Log-majorization of the moduli of the eigenvalues of a matrix polynomial
by tropical roots},
Linear Algebra Appl. \textbf{528} (2017), 394--435.

\bibitem{Butkovic}
P.~Butkovi\v{c},
\emph{Max-linear Systems: Theory and Algorithms},
Springer Monographs in Mathematics, Springer, London, 2010.

\bibitem{FallatJohnson}
S.~M. Fallat and C.~R. Johnson,
\emph{Totally Nonnegative Matrices},
Princeton University Press, Princeton, NJ, 2011.

\bibitem{GaubertNiv}
S.~Gaubert and A.~Niv,
\emph{Tropical totally positive matrices},
J. Algebra \textbf{515} (2018), 511--544.

\bibitem{Marker}
D.~Marker,
\emph{Model Theory: An Introduction},
Graduate Texts in Mathematics, vol.~217, Springer-Verlag, New York, 2002.

\bibitem{Markwig}
T.~Markwig,
\emph{A field of generalised Puiseux series for tropical geometry},
Rend. Semin. Mat. Univ. Politec. Torino \textbf{68} (2010), no.~1, 79--92.

\bibitem{MarshallOlkinArnold}
A.~W. Marshall, I.~Olkin, and B.~C. Arnold,
\emph{Inequalities: Theory of Majorization and Its Applications},
2nd ed., Springer Series in Statistics, Springer, New York, 2011.

\bibitem{ManjeganiPointSpectrum}
S.~M. Manjegani, A.~Peperko, and H.~Shokooh Saljooghi,
\emph{Asymptotic formulae and inequalities for point spectrum in max algebra},
Linear Algebra Appl. \textbf{634} (2022), 112--136.

\bibitem{ManjeganiParsa}
S.~M. Manjegani and T.~Parsa,
\emph{Stochastic matrices and majorization in max algebra},
Linear Algebra Appl. \textbf{736} (2026), 225--244.

\bibitem{peper}
A.~Rosenmann, F.~Lehner, and A.~Peperko,
\emph{Polynomial convolutions in max-plus algebra},
Linear Algebra Appl. \textbf{578} (2019), 370--401.
\bibitem{Ag}
A.~Agudelo and O.~Lorscheid,
\emph{Factorizations of tropical and sign polynomials},
Indag. Math. (N.S.) \textbf{32} (2021), no.~4, 797--812.

\end{thebibliography}
\end{document}